\DeclareFontFamily{U}{mathx}{}
\DeclareFontShape{U}{mathx}{m}{n}{<-> mathx10}{}
\DeclareSymbolFont{mathx}{U}{mathx}{m}{n}
\DeclareMathAccent{\widehat}{0}{mathx}{"70}
\DeclareMathAccent{\widecheck}{0}{mathx}{"71}
\newtheorem{theorem}{Theorem}[section]
\newtheorem{corollary}[theorem]{Corollary}
\newtheorem{question}[theorem]{Question}
\newtheorem{remark}[theorem]{Remark}
\newtheorem{lemma}[theorem]{Lemma}
\newtheorem{proposition}[theorem]{Proposition}
\newcommand{\B}{\mathcal{B}}
\newcommand{\ZZ}{\mathbb{Z}}
\newcommand{\Fq}{\mathbb{F}_q}
\newcommand{\M}{\mathcal{M}}
\def\a{{\alpha}}
\def\Fq{{\mathbb F}_q}
\def\FF{{\mathbb F}}
\def\PP{{\mathbb P}} 
\newcommand{\V}{{\mathcal{V}}}
\begin{document}
\title{The second minimum weight of Grassmann codes}  

\author{Mrinmoy Datta}

\address{Department of Mathematics, \newline \indent
Indian Institute of Technology Hyderabad, Kandi, Sangareddy, Telangana, India}
\email{mrinmoy.datta@math.iith.ac.in}
\thanks{The first named author is partially supported by a research grant 02011/34/2025/R\&D-II/DAE/9465 from the National Board for Higher Mathematics, Department of Atomic Energy, India.} 

\author{Tiasa Dutta}

\address{Department of Mathematics, \newline \indent
Indian Institute of Technology Hyderabad, Kandi, Sangareddy, Telangana, India}
\email{duttatiasa98@gmail.com}
\thanks{The second named author is partially supported by a DST-INSPIRE Ph.D. fellowship from the
Department of Science and Technology, Govt. of India}
\keywords{Finite fields, Hyperplanes, Rational points, Grassmann codes}
\subjclass[2020]{Primary 14G50, 14M15, 94B27}

\date{}
\begin{abstract}   
We give an independent combinatorial proof of Nogin's Theorem concerning the minimum distance of the Grassmann codes using a special decomposition of the Grassmannians. We use the same idea to also compute the second minimum weight of the Grassmann codes. 
\end{abstract}
\maketitle

\section{Introduction}


Fix a prime power $q$ and the finite field $\Fq$ with $q$ elements. 
An $[n, k]_q$ linear code (or simply an $[n, k]$ code) is a 
$k$-dimensional subspace of $\Fq^n$. On $\Fq^n$, viewed as an 
$n$-dimensional vector space with the natural basis, one can 
naturally define the Hamming metric in the following way: Given 
$x = (x_1, \dots, x_n), y = (y_1, \dots, y_n) \in \Fq^n$, we 
define the Hamming distance between $x$ and $y$, denoted by 
$d_H(x, y)$, as $d_H(x, y) = |\{i : x_i \neq y_i\}|$. If $C$ 
is an $[n, k]$ code, then the minimum distance or minimum weight 
of $C$ is defined as
$$d(C) := \min\{d_H(x, y) : x, y \in C, x \neq y\} = 
\min\{d_H(x, 0) : x \in C, x \neq 0\}.$$
An $[n, k]$ code with minimum distance $d$ is called an $[n, k, d]$ 
code.

Let $V_m$ be a vector space over $\Fq$ of dimension $m$. The 
Grassmannian $G(\ell, V_m)$ is the set of all $\ell$-dimensional 
subspaces of $V_m$. As explained in Section~\ref{sec:gra}, the 
Grassmannian $G(\ell, V_m)$ can be embedded in the projective space 
$\PP(\bigwedge^\ell V_m)$ via the Pl\"ucker embedding 
$\pi_{\ell,m}: G(\ell, V_m) \to \PP(\bigwedge^\ell V_m)$. 
Fix an ordering of the elements of $G(\ell, V_m) = 
\{L_1, \ldots, L_n\}$, where $n = {{m} \brack{\ell}}_q$. 
For any linear functional $f \in \left(\bigwedge^\ell V_m\right)^*$, 
we define the evaluation map
$$\mathrm{ev}: \left(\bigwedge^\ell V_m\right)^* \to \Fq^n 
\quad \text{given by} \quad 
f \mapsto \left(f(\pi_{\ell,m}(L_1)), \ldots, 
f(\pi_{\ell,m}(L_n))\right).$$
Since $G(\ell, V_m)$ is a non-degenerate subset of 
$\PP(\bigwedge^\ell V_m)$ (as explained in 
Section \ref{sec:hyp}), the map $\mathrm{ev}$ is injective.
Consequently, the image of $\mathrm{ev}$ is a linear code of length 
$n = {m \brack \ell}_q$ and dimension $k = \binom{m}{\ell}$, 
called the \emph{Grassmann code} and denoted by $C(\ell, m)$. 

Moreover, the Hamming weight of the codeword $\mathrm{ev}(f)$ equals 
$n - |\Pi_f \cap G(\ell, V_m)|$, where $\Pi_f$ is the hyperplane 
defined by $f$ in $\PP(\bigwedge^\ell V_m)$. Consequently, the 
number of weight-$w$ codewords of $C(\ell, m)$ is in bijection 
with the set of hyperplanes $\Pi$ of $\PP(\bigwedge^\ell V_m)$ 
satisfying $|\Pi \cap G(\ell, V_m)| = n - w$. In particular, 
determining the minimum distance of $C(\ell, m)$ is equivalent 
to determining
$$\max\left\{|\Pi \cap G(\ell, V_m)| : \Pi \ \text{is a hyperplane of} 
\ \PP(\bigwedge^\ell V_m)\right\}.$$

 These codes were first studied in \cite{R, R1, R2}. In \cite{N}, Nogin showed that the minimum distance of $C(\ell, m)$ is given by $q^{\ell (m - \ell)}$. Equivalently, if $\Pi$ is a hyperplane in $\PP\left(\bigwedge^\ell V_m \right)$, then 
\begin{equation}\label{Nog}
|\Pi \cap G(\ell, V_m)| \le {m \brack \ell}_q - q^{\ell (m - \ell)}.
\end{equation}
Nogin also proved that the upper bound in \eqref{Nog} is attained by a hyperplane $\Pi$ if and only if $\Pi$ is decomposable (cf. Section \ref{sec:hyp}). This gives a complete classification of the minimum weight codewords of Grassmann codes. 
In this article, we are interested in determination of the second minimum weight of the Grassmann code $C(\ell, m)$. Geometrically, we ask the following question.
\begin{question}\label{q1}
   Determine $$\max \left\{|\Pi \cap G(\ell, V_m)| : \Pi \ \text{is a hyperplane in} \ \PP\left(\bigwedge^\ell V_m \right), |\Pi \cap G(\ell, V_m)| < {m \brack \ell}_q - q^{\ell (m - \ell)} \right\}.$$
\end{question}
 
To the best of our knowledge,
a general formula for the second minimum weight of $C(\ell, m)$ is not known. We remark that the complete weight distribution for $C(\ell, m)$, when $\ell = 2$ was obtained by Nogin in \cite{N}. Later Nogin also determined the complete weight distribution for $C(3, 6)$ in \cite{N1}. Morever, Kaipa and Pillai worked out the complete weight distribution for $C(3, 7)$ in \cite{KP}.  In general, the Grassmannian codes have been widely studied. For example, a few of the generalized Hamming weights are determined in \cite{N, GL, GPP, HJR}, while the automorphism groups of Grassmann codes were determined in \cite{GK}.

A generalization of Grassmann codes, namely, the Schubert codes 
were introduced by Ghorpade and Lachaud in \cite{GL}. To this end, 
fix an ordered basis $\B = \{v_1, \dots, v_m\}$ of $V_m$ and an 
ordering of the elements of $I(\ell, m)$, as explained in 
Subsection~\ref{sec:gra} and \ref{sec:sch}. For any $\a \in I(\ell, m)$, the 
$\a$-th Schubert subvariety $\Omega_\a$ is embedded as a 
non-degenerate subset of $\PP^{k_\a - 1}$, where 
$k_\a = |\nabla(\a)|$. Fix an ordering of the points of $\Omega_\a 
= \{P_1, \ldots, P_{n_\a}\}$, where 
$$n_\a = \sum_{\beta \in \nabla(\a)} q^{\delta(\beta)}.$$
For any linear functional $f$ on the ambient projective space 
$\PP^{k_\a - 1}$, we define the evaluation map
$$\mathrm{ev}_\a : (\PP^{k_\a - 1})^* \to \Fq^{n_\a} \quad 
\text{given by} \quad f \mapsto (f(P_1), \ldots, f(P_{n_\a})).$$
The image of $\mathrm{ev}_\a$ is called the \emph{Schubert code} 
and is denoted by $C_\a(\ell, m)$. For the definitions of 
$I(\ell, m)$, $\delta(\beta)$, and $\nabla(\a)$, we refer the 
reader to Subsection~\ref{sec:sch}. Several interesting formulas 
for $n_\a$ and $k_\a$ are known. We refer the reader to \cite{GT} 
and the references therein for more information on the same. Note 
that when $\a = (m-\ell+1, \ldots, m)$, the code $C_\a(\ell, m)$ 
is the same as the Grassmann code $C(\ell, m)$. 

The minimum distance of the Schubert code $C_\a(\ell, m)$ is given 
by $q^{\delta(\a)}$, where 
$\delta(\a) = \a_1 + \cdots + \a_\ell - \frac{\ell(\ell+1)}{2}$. 
Geometrically speaking, this is equivalent to the fact that if 
$\Pi$ is a hyperplane in $\PP^{k_\a - 1}$, then 
\begin{equation}\label{minsch}
|\Pi \cap \Omega_\a| \le n_\a - q^{\delta(\a)}.
\end{equation}
A minimum distance formula for the code $C_\a(\ell, m)$ was 
conjectured in \cite{GL}. This conjecture was proved when $\ell = 2$ 
by Chen in \cite[Theorem 2.1]{C} and independently by Guerra and 
Vincenti in \cite[Theorem 1.2]{GV}. Finally, Xiang settled the 
conjecture in the affirmative in \cite[Theorem 2]{X}. Recently, 
a new proof of this conjecture was published by Ghorpade and Singh 
in \cite[Theorem 3.6]{GS}. However, unlike in the case of Grassmann 
codes, a complete description of the minimum weight codewords of 
Schubert codes is still unknown in general. Ghorpade and Singh 
\cite[Conjecture 5.6]{GS} have conjectured that the minimum weight 
codewords of the Schubert codes are given by the so-called 
\emph{Schubert decomposable} codewords. A description of the 
Schubert decomposable codewords is beyond the scope of this paper 
and we refer the reader to \cite{GS} for the same. Recently, 
considerable progress in the determination of the minimum weight 
codewords of Schubert codes has been made in \cite{DDJ}, where 
the conjecture of Ghorpade and Singh is established for all 
Schubert varieties for all but finitely many values of $q$.

This paper is organized as follows. In Section~\ref{sec:prel}, 
we give a rather leisurely introduction to the basics of 
Grassmannians and their Schubert subvarieties. In 
Section~\ref{dec}, we give a combinatorial decomposition of the 
Grassmannians which, to the best of our knowledge, is new. 
Finally, we answer Question~\ref{q1} in Section~\ref{sec:main} 
followed by providing an independent proof of Nogin's Theorem on 
the minimum distance of Grassmann codes.

\section{Preliminaries}\label{sec:prel}
In this section, we recall the definition of Grassmannian, their Schubert subvarieties, along with the the codes obtained from them. We also collect several of the properties of Grassmannian and Schubert varieties in this section for the ease of reference. An expert could simply skip this section and proceed straight to the subsequent sections. We believe that this rather elaborate section will be useful for a reader having somewhat less experience working with Grassmannian. As such, none of the results in this section are new. They can be found in classical texts such as \cite{A, HP, M}, and in modern texts such as \cite{LB}. Moreover, the results on coding theory that are mentioned here could be found in \cite{C, GL, GPP, GS, HJR, N, X}. We will try to provide accurate references to facts as we go along with the discussion.  

\subsection{Grassmannian, Pl\"ucker projective space, and Pl\"ucker embedding}\label{sec:gra}
Let $\ell, m$ be integers satisfying $1 \le \ell \le m$ and $V_m$ be an $m$-dimensional vector space over a field $\mathbb{F}$ with a fixed ordered basis $\mathcal{B} = \{v_1, \dots, v_m\}$ of $V_m$. 
We denote by $G (\ell, V_m)$ the \emph{Grassmannian of all $\ell$-dimensional subspaces of $V_m$}, that is,
$$G (\ell, V_m) :=\{ L \subset V_m : L \ \text{is a subspace of} \ V_m, \ \dim L = \ell\}.$$
Having defined $G(\ell, V_m)$ set-theoretically, we now proceed to 
impose a geometric structure on $G(\ell, V_m)$ via the well-known \textit{Pl\"{u}cker embedding} that is defined as follows:
Define 
\begin{equation}\label{pl1}
    \pi_{\ell, m} : G(\ell, V_m) \to \PP \left(\bigwedge^\ell V_m\right) \ \ \text{given by} \ \ L = {\rm Span} \{\omega_1, \dots, \omega_\ell\} \mapsto [\omega_1 \wedge \cdots \wedge \omega_\ell].
\end{equation}
 The projective space $\PP \left(\bigwedge^\ell V_m\right)$ is called the \textit{Pl\"ucker projective space}.
 The map $\pi_{\ell,m}$ is well-defined since replacing some $\omega_j$ by $a\omega_j + b\omega_i$ for some $\omega_i$ and for any $a, b \in \mathbb{F}$ with $a \neq 0$ leaves the class 
$[\omega_1 \wedge \cdots \wedge \omega_\ell]$ unchanged, and any 
two bases of $L$ can be obtained from one another by a sequence 
of such elementary operations. Moreover, the map $\pi_{\ell,m}$ 
is injective \cite[Theorem 5.2.1]{LB}.
 
 Let us denote by 
$$I(\ell, m) = \{\a = (\a_1, \dots, \a_\ell) \in \ZZ^\ell : 1 \le \a_1 < \cdots < \a_\ell \le m\}.$$ That is, the set $I(\ell, m)$ consists of all the subsets of $\{1, \dots, m\}$ of size $\ell$ written in increasing order. The elements are expressed as $(\a_1, \dots, \a_\ell)$ as opposed to $\{\a_1, \dots, \a_{\ell}\}$ to emphasize that the coordinates are written in increasing order.
Fix an ordering of the elements of $I(\ell, m)$. This in turn induces an ordering of $\{X_\a : \a \in I(\ell, m)\}$, the homogeneous coordinates of the projective space $\PP^{k-1}$.
Note that $\bigwedge^{\ell} V_m$ is a $k = {m \choose \ell}$-dimensional vector space over $\mathbb{F}$ with an ordered basis $\{v_\a = v_{\a_1} \wedge \cdots \wedge  v_{\a_\ell} : \a \in I(\ell, m)\}.$ Let $L \in G(\ell, V_m)$ and $\{\omega_1, \dots, \omega_\ell \}$ be a basis of $L$ over $\FF$. Since $\{v_1, \ldots, v_m\}$ is a basis of $V_m$, we can express each $\omega_i$ as a linear combination $$\omega_i = a_{i1}v_1 + \cdots + a_{im}v_m,$$ and we call the coefficients $a_{ij} \in \mathbb{F}$ the coordinates of $\omega_i$ with respect to $\mathcal{B}$.
In particular, there is an $\ell \times m$ matrix $A_L = (a_{ij})$  such that the row space of $A_L$ equals $L$. It can be shown that 
\begin{equation}\label{coord}
    \omega_1 \wedge \cdots \wedge \omega_\ell = \sum_{\a \in I(\ell, m)} p_{\a} (A_L) v_\a,
\end{equation}
where $p_{\a} (A_L)$ is the $\a$-th minor of $A_L$. The element $[p_\a (A_L)]_{\a \in I(\ell, m)} \in \PP (\bigwedge^{\ell} V_{m})$ is called the Pl\"ucker coordinates of $L$ with respect to the basis $\B$. By abuse of notation, we keep denoting the image $\pi_{\ell, m} (G(\ell, V_m))$ by $G(\ell, V_m)$, which does not cause any ambiguity.

It is worth mentioning that all the above discussions are subject to a fixed basis of $V_m$. \emph{What happens when we change the basis of $V_m$?} It is well-known that a change of basis of $V_m$, that is, the application of an invertible linear map 
$\phi: V_m \to V_m$, induces an automorphism $\bigwedge^\ell \phi: \bigwedge^\ell V_m \to \bigwedge^\ell V_m$ 
given by $v_{\alpha_1} \wedge \cdots \wedge v_{\alpha_\ell} \mapsto \phi(v_{\alpha_1}) \wedge \cdots \wedge \phi(v_{\alpha_\ell})$, 
known as the $\ell$-th compound of $\phi$ (see \cite[Page 94]{M}). Consequently, any change in the basis of $V_m$ induces a 
projective linear isomorphism of $\PP(\bigwedge^\ell V_m)$ taking $G(\ell, V_m)$ to itself (see \cite[Chapter XIV, 
Section 1]{HP}).
Thus, a change in the basis of $V_m$ does not essentially affect the image of the Pl\"ucker map.  One advantage of using the description in \eqref{coord} for elements of $G(\ell, V_m)$ is that it allows us to introduce the projective coordinates $(X_\a)_{\a \in I(\ell, m)}$ for the projective space $\PP \left(\bigwedge^\ell V_m\right)$ when an ordered basis $\{v_1, \dots, v_m\}$ of $V_m$ and an ordering of the elements in $I(\ell, m)$ is fixed. When $\FF$ is an algebraically closed field, it can be shown that $G(\ell, V_m)$ is given by the solutions to a system of some homogeneous quadratic equations in $X_\a$-s, known as the Plücker relations. We refer to \cite[Theorem 5.2.3]{LB} for a complete proof of this fact. Consequently, the subset $G(\ell, V_m)$ can be regarded as a projective algebraic variety when working over an algebraically closed field.

At any rate, let us keep working with a fixed ordered basis $\B = \{v_1, \dots, v_m\}$ of $V_m$, as mentioned above. Since a change of basis of $L \in G(\ell, V_m)$ does not affect $\pi_{\ell, m}$, we may choose a basis of $L$ such that $A_L$ is in right-row-reduced echelon form (that is, the echelon form where the pivot of each row is its rightmost nonzero entry, the pivots move from left to right as we go down the rows, and all entries above and below each pivot are zero), which is uniquely determined by $L$. That is, $L$ can be uniquely represented by an $\ell \times m$ matrix  $M_{\B}(L)$ satisfying
\begin{enumerate}
    \item[(a)] the rows of $M_{\B}(L)$ are elements of $L$ written with respect to the basis $\mathcal{B}$ of $V_m$.
    \item[(b)]  the row-space of $M_{\B}(L)$ is $L$, 
    \item[(c)] for each $i = 1, \dots, \ell$ the last non-zero entry of the $i$-th row, called the \emph{pivot of $i$-th row}, is equal to $1$,
    \item[(d)] the last non-zero entry of $(i+1)$-st row appears to the right of the last nonzero entry of the $i$-th row,
    \item[(e)] all the entries above and below of a pivot are $0$. 
\end{enumerate}
 In particular,  this sets up a one-to-one correspondence,
\begin{equation}\label{em}
    G(\ell, V_m) \longleftrightarrow \M (\ell, m) \ \ \text{given by} \ \ L \longleftrightarrow M_{\B}(L),
\end{equation}
where $\M (\ell, m)$ is the set of all $\ell \times m$ matrices in right-row-reduced echelon form with entries in $\FF$. 
It follows from the discussion above that, with respect to a fixed basis $\B$ of $V_m$, the map $\pi_{\ell, m}$ in \eqref{pl1} can be expressed in terms of coordinates  as 
\begin{equation}\label{pl2}
    \pi_{\ell, m}: G (\ell, V_m) \to \PP\left(\bigwedge^\ell V_m\right) \ \ \text{as} \ \ \pi_{\ell, m} (L) = (p_\a (M_{\B}(L)))_{\a \in I(\ell, m)}.
\end{equation}
This description will be used later in this article. 

\subsection{Schubert subvarieties of $G(\ell, V_m)$}\label{sec:sch} The Grassmannian $G(\ell, V_m)$ contains a special class of 
subvarieties, known as the \emph{Schubert varieties}. Following 
the notations introduced in the previous subsection, we recall 
the definitions and a few interesting properties of Schubert 
varieties that will be used later in this paper. We define the 
partial order, also known as the Bruhat order in the present 
context, on the elements of $I(\ell, m)$ as follows: for 
$\alpha = (\alpha_1, \ldots, \alpha_\ell), \beta = (\beta_1, 
\ldots, \beta_\ell) \in I(\ell, m)$, we say 
$$\alpha \leq \beta \iff \alpha_i \leq \beta_i \ \text{ for all } 
\ i \in \{1, \ldots, \ell\},$$
that is, the order is defined by entry-wise comparison, also 
known as the product order on $\mathbb{Z}^\ell$ restricted to 
$I(\ell, m)$. For $\alpha \in I(\ell, m)$, define 
$$\nabla(\alpha) := \{\beta \in I(\ell,m) : \beta \leq \alpha\},$$
that is, $\nabla(\alpha)$ is the \emph{downset} generated by 
$\alpha$ in $I(\ell, m)$ with respect to the Bruhat order. We 
set $\Delta(\alpha) = I(\ell, m) \setminus \nabla(\alpha)$ and 
$k_\alpha = |\nabla(\alpha)|$. For $\a \in I(\ell, m)$, the 
\emph{$\a$-th Schubert cell}, with respect to the basis 
$\B =\{v_1, \dots, v_m\}$, denoted by $C_{\a}$\footnote{Ideally, 
we should use a more complicated notation such as 
$C_\a (\ell, m, \B)$ to emphasize the dependence on the 
parameters $\ell$, $m$, and the basis $\B$. However, in the 
subsequent parts of this article, there will be no ambiguity 
if we restrict to this simpler notation}, is defined as
$$C_{\a} := \{L \in G(\ell, V_m) : \ \text{the pivots of} \ 
M_{\B}(L) \ \text{are on the columns} \ \a_1, \dots, \a_\ell \}.$$
It is easy to see that $C_\a$ could be identified with an affine 
space over $\FF$ of dimension $\delta (\a)$, where 
$\delta (\a) = (\a_1 + \dots + \a_\ell) - \frac{\ell (\ell+1)}{2}$.

The \emph{$\a$-th Schubert variety in} $G(\ell, V_m)$ with respect to the basis $\B$, denoted by $\Omega_\a$, is defined as 
\begin{equation}\label{Sch}
\Omega_\a = \bigcup_{\beta \in \nabla(\a)} C_{\beta}.
\end{equation}
In particular, if $\a = (m - \ell + 1, \dots, m)$, then $\Omega_\a = G(\ell, V_m)$. 
For $i= 1, \dots, m$, let $V_i$ be the subspace of $V_m$ spanned by $\{v_1, \dots, v_i\}$. We see that for every $\a = (\a_1, \dots, \a_\ell) \in I(\ell, m)$, $$L \in C_{\a} \iff \dim L \cap V_{\a_j} = j  \ \text{and} \dim L \cap V_i < j \ \text{for} \ i < \a_j \ \  \text{for every} \ j = 1, \dots, \ell,$$ and  $$L \in \Omega_{\a} \iff \dim L \cap V_{\a_j} \ge j \ \  \text{for every} \ j = 1, \dots, \ell.$$

Geometrically speaking, the Pl\"ucker embedding $\pi_{\ell, m}$ restricted to $\Omega_\a$ allows us to view the Schubert variety as a subset of the projective space $\PP\left(\bigwedge^{\ell} V_m\right)$. Recall that, with respect to the basis $\B$ and a fixed ordering of the elements of $I(\ell, m)$, the projective space $\PP^{k-1} = \PP\left(\bigwedge^{\ell} V_m\right)$ is endowed with the homogeneous coordinates $(X_{\a})_{\a \in I(\ell, m)}$. Define the projective linear subspace $\PP^{k_\a - 1}$ of $\PP\left(\bigwedge^{\ell} V_m\right)$ by 
$$\PP^{k_\a - 1} := \left\{ P \in \PP\left(\bigwedge^{\ell} V_m\right) : X_\beta (P) = 0 \ \text{for all} \ \beta \in \Delta (\a)\right\},$$
where $\{X_{\beta}: \beta \in I(\ell, m)\}$ are the homogeneous coordinates of $\PP^{k-1} = \PP \big(\bigwedge^{\ell} V_m \big)$, as defined in the last subsection.  
It is  well-known that 
\begin{equation}\label{commd1}
    \pi_{\ell, m} (\Omega_\a)  = G(\ell, V_m) \cap \PP^{k_a - 1},
\end{equation}
where $\PP^{k_\a - 1}$ is as defined above.
That is, the $\a$-th Schubert variety $\Omega_\a$ can be geometrically understood as a linear section of the Grassmann variety $G(\ell, V_m)$ given by some coordinate hyperplanes of the Pl\"ucker space. As with the Grassmannian $G(\ell, V_m)$, we will use the notation $\Omega_\a$ to denote its image in the Pl\"ucker projective space $\PP^{k_\a-1}$.  

\subsection{Hyperplanes in Pl\"ucker projective space}\label{sec:hyp}  Generally speaking, if $V$ is a vector space over a field $\FF$, then the hyperplanes of the projective space $\PP (V)$ are given by the nonzero elements of $V^*$, the dual space of $V$. As can be found in any standard linear algebra textbook (for example see \cite[Section 3.5]{HK}), if $\{v_1, \dots, v_m\}$ is an ordered basis of $V$, then $\{v_1^*, \dots, v_m^*\}$ is an ordered \textit{dual} basis of $V^*$, that is
\begin{align*}
    v_i^* (v_j) =
    \begin{cases}
        0 \ \ &\text{if} \ i \neq j \\
        1 \ \ &\text{if} \ i = j.
    \end{cases}
\end{align*}
In particular, any element of $f \in V^*$ can be expressed uniquely as 
\begin{equation}\label{dualformula}
f = \sum_{i=1}^m f(v_i) v_i^*.\end{equation}

Let us now return to the study of Grassmannians. As in the previous subsections, we fix  vector space $V_m$ of dimension $m$ over a field $\FF$, an ordered basis $\B = \{v_1, \dots, v_m\}$ of $V_m$ and a fixed ordering of the elements of $I(\ell, m)$. We now have an ordered basis $\B_\ell = \{v_\a : \a \in I(\ell, m)\}$  of $\bigwedge^\ell V_m$, where for any $\a = (\a_1, \dots, \a_\ell) \in I(\ell, m)$, the element $v_\a = v_{\a_1} \wedge \cdots \wedge v_{\a_\ell}$. Furthermore, for any $\a = (\a_1, \dots, \a_\ell) \in I(\ell, m)$, we define $\a^{\mathsf{C}}$ to be the unique element of $I(m - \ell, m) = \{(\a_1, \dots, \a_{m-\ell}) \in \ZZ^{m-\ell} : 1 \le \a_1 < \cdots < \a_{m - \ell} \le m\}$ such that $\a \cup \a^{\mathsf{C}} = \{1, \dots, m\}$. Moreover, for each such $\a \in I(\ell, m)$, we define $\epsilon (\a)$ to be the sign of the permutation that takes the element $(\a^{\mathsf{C}}, \a)$, that is obtained by juxtaposing $\a^{\mathsf{C}}$ and $\a$, to $(1, \dots, m)$. 
It is known that the bilinear map 
\begin{equation}\label{wedge}
    \bigwedge : \bigwedge^{m-\ell} V_m \times \bigwedge^{\ell} V_m \to \FF
\end{equation}
is non-degenerate and the dual basis corresponding to $\B_{\ell}$ is given by $\B_{m - \ell} = \{{\epsilon({\a})} v_{\a^{\mathsf{C}}} : \a \in I(\ell, m)\}$. This readily gives rise to the isomorphism
\begin{equation}\label{dual}
  \left(\bigwedge^\ell V_m\right)^* \cong \bigwedge^{m-\ell} V_m.
\end{equation}

If $F \in \bigwedge^{m- \ell} V_m$ defines any hyperplane $\Pi$ in $\PP(\bigwedge^{\ell} V_m)$, then $F$ is given by a linear combination of elements of $\B_{m-\ell}$, i.e. 
\begin{equation}\label{hyperplane}
  F = \sum_{\a \in I(\ell, m)}{\epsilon(\a)} c_\a  v_{\a^{\mathsf{C}}} \in \bigwedge^{m-\ell} V_m,  
\end{equation}
where, the coefficients $c_{\a} \in \FF$ are uniquely determined by \eqref{dualformula}.  On the other hand, any hyperplane in $\PP(\bigwedge^\ell V_m)$ is given by an equation of the form 
$$\sum_{\a \in I(\ell, m)} c_\a X_\a =0,$$
where $X_\a$ are the homogeneous coordinates of the Pl\"ucker space $\PP (\bigwedge^{\ell} V_m)$. 
What does $X_\a$ correspond to as an element of $\bigwedge^{m-\ell} V_m$? Alluding to  \eqref{dualformula}, and elementary properties of exterior products, it can be readily checked that the hyperplane of $\PP(\bigwedge^\ell V_m)$ corresponding to $X_\a$ is given by the element ${\epsilon(\a)} v_{\a^{\mathsf{C}}} \in \bigwedge^{m-\ell} V_m$. Consequently, for a general hyperplane in $\PP (\bigwedge^{\ell} V_m)$, we have the following identification 
\begin{equation}\label{hypid}
    \sum_{\a \in I (\ell, m)} c_\a X_{\a} \longleftrightarrow \sum_{\a \in I(\ell, m)} {\epsilon(\a)} c_\a v_{\a^{\mathsf{C}}}.
\end{equation}
 The non-degeneracy of the map in \eqref{wedge} implies that the Grassmannian $G(\ell, V_m)$ is not contained in any hyperplane of $\PP \left(\bigwedge^{\ell} V_m\right)$. In other words, the Grassmannian $G(\ell, V_m)$ is a \emph{non-degenerate subset} of $\PP(\bigwedge^{\ell} V_m)$. It is also well-known that the $\a$-th Schubert variety $\Omega_\a$ of $G(\ell, V_m)$ is also a non-degenerate subset of $\PP^{k_\a - 1}$. For a proof of the non-degeneracy, we refer the reader to \cite[Remark 5.3.4]{LB}. The following observation will be useful later. So we record this as a remark for ease of reference.

\begin{remark}\label{rem1}\normalfont
Suppose $\Pi$ is a hyperplane in $\PP (\bigwedge^{\ell} V_m)$ given by the zeroes of a homogeneous linear polynomial $$F(X_\beta : \beta \in I(\ell, m)) = \sum_{\beta \in I(\ell, m)} c_\beta X_\beta.$$ For $\a \in I(\ell, m)$, the polynomial; $F$ restricts to a linear homogeneous polynomial $F_\a$ on $\PP^{k_\a-1}$ given by 
$$F_\a (X_\beta : \beta \in I(\ell, m)) = \sum_{\beta \in \nabla(\a)} c_\beta X_\beta.$$
The non-degeneracy of $\Omega_\a \subseteq \PP^{k_\a - 1}$ implies that $\Pi$ contains $\Omega_\a$ if and only if $c_{\beta} = 0$ for all $\beta \in \nabla(\a)$. Furthermore, if the hyperplane $\Pi$ does not contain $\Omega_\a$, then $\Pi$ restricts to the hyperplane $\Pi_\a$ in $\PP^{k_\a - 1}$ given by the equation $F_\a = 0$. 
\end{remark}

A nonzero element $z \in \bigwedge^{m- \ell} V_m$ is said to be \emph{decomposable} if there exist $w_1, \dots, w_{m-\ell} \in V_m$ such that $z = w_1 \wedge \cdots \wedge w_{m-\ell}$. A hyperplane of $\PP\left(\bigwedge^\ell V_m\right)$ is said to be \emph{decomposable} if it is given by a decomposable element of $\bigwedge^{m- \ell} V_m$. Thus, in view of \eqref{hypid}, a hyperplane $\Pi$ of $\displaystyle{\PP\left(\bigwedge^{\ell} V_m\right)}$ given by the equation $\displaystyle{\sum_{\a \in I(\ell, m)}} c_\a X_\a = 0$ is decomposable if and only if $\displaystyle{\sum_{\a \in I(\ell, m)} \epsilon (\a) c_\a v_{\a^{\mathsf{C}}}}$ is a decomposable element of $\bigwedge^{m - \ell} V_m$.  

\begin{remark}\label{rem1:dec}\normalfont
    For a nonzero element $z \in \bigwedge^{m - \ell} V_m$, define a subspace $V_m (z) = \{x \in V_m : z \wedge x = 0\}$. We have
    \begin{enumerate}
        \item[(a)] \cite[Section 4.1, Theorem 1.1]{M}  $z$ is decomposable if and only if $\dim V_m (z) = m - \ell$. 
        \item[(b)] \cite[Section 4.1, Theorem 1.3]{M} if $\ell = m-1$, then $z$ is decomposable. 
    \end{enumerate}
    Let us see a quick application of (a) and (b) in the simple case when $\ell = 2$. Let us fix a basis $\B = \{v_1, \dots, v_m\}$ of $V_m$ and consider a hyperplane $\Pi$ of $\PP\left(\bigwedge^2 V_m\right)$ given by the equation $\displaystyle{\sum_{i=1}^{m-1}} {c_i} X_{(i,m)} = 0$. In view of \eqref{hypid}, the defining element $z \in \PP \left( \bigwedge^{m- 2} V_m \right)$ of $\Pi$ could be expressed in terms of wedges as 
    $$z= \sum_{i=1}^{m-1} \epsilon ((i, m)) c_i v_1 \wedge \dots \wedge \widecheck{v_i} \wedge \dots \wedge v_{m-1},$$
    where $(i, m)^{\mathsf{C}}$ is the unique element $\{\a_1, \dots, \a_{m-2}\}$ of $I (m - 2, 2)$ such that $\{\a_1, \dots, \a_{m-2}\} \cup \{i, m\} = \{1, \dots, m\}$ and  $v_1 \wedge \dots \wedge \widecheck{v_i} \wedge \dots \wedge v_{m-1}$ is the element of $\bigwedge^{m-2} V_m$ that is obtained by first writing down the wedge $v_1 \wedge \cdots \wedge v_{m-1}$ and dropping the $i$-th term from the same. 
    It follows that $z \in \bigwedge^{m- 2} V_{m-1}$, where $V_{m-1}$ is the subspace of $V_m$ spanned by $\{v_1, \dots, v_{m-1}\}$. By (b), the element $z$ is decomposable in $\bigwedge^{m- 2} V_{m-1}$. Since $V_{m-1} \subset V_m$, we see that $z$ is also decomposable in $\bigwedge^{m- 2} V_{m}$. This can also be derived from the discussion on decomposable subspaces in \cite[Section 2]{GPP}. 
\end{remark}

\subsection{Enumeration over a finite field}\label{sec:en} Let us now restrict our attention to Grassmannians and their Schubert subvarieties over finite fields. To this end, let us fix a prime power $q$, and denote by $\Fq$ the finite field with $q$ elements. As usual, let $V_m$ be an $m$- dimensional vector space over $\Fq$ and $G(\ell, V_m)$ the Grassmannian of all $\ell$-dimensional subspaces of $V_m$. It is well-known  that the cardinality of $G(\ell, V_m)$ is given by the so-called \emph{Gaussian Binomials}, that is, 
\begin{equation}\label{gb}
   |G(\ell, V_m)| = {m \brack \ell}_q := \frac{(q^m - 1)\cdots (q^m - q^{\ell - 1})}{(q^\ell - 1)\cdots (q^\ell - q^{\ell - 1})}. 
\end{equation}
We state some of the well-known identities of the Gaussian Binomals below as a proposition for ease of reference. 

\begin{proposition}\label{identities}
    For positive integers $\ell, m$ satisfying $1 \le \ell \le m$, we have,
    \begin{enumerate}
        \item[(a)] ${m \brack \ell}_q = {m \brack m - \ell}_q$. 
        \item[(b)] ${m \brack \ell}_q = {m - 1 \brack \ell}_q + q^{m- \ell}{m - 1 \brack \ell - 1}_q$.
        \item[(c)] ${m \brack \ell}_q = \frac{q^m - 1}{q^{m - \ell} - 1} {m - 1 \brack \ell}_q$
    \end{enumerate}
\end{proposition}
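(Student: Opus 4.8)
The plan is to reduce everything to a single clean closed form for the Gaussian binomial and then verify each identity by elementary manipulation. Starting from the definition \eqref{gb}, I would factor a power of $q$ out of each factor in the numerator and denominator: writing $q^m - q^i = q^i(q^{m-i}-1)$ and $q^\ell - q^i = q^i(q^{\ell-i}-1)$, the common factor $q^{0+1+\cdots+(\ell-1)}$ cancels between top and bottom, leaving
$$
{m \brack \ell}_q = \frac{\prod_{j=m-\ell+1}^{m}(q^j-1)}{\prod_{j=1}^{\ell}(q^j-1)}.
$$
Multiplying numerator and denominator by $\prod_{j=1}^{m-\ell}(q^j-1)$ then gives the manifestly symmetric form
$$
{m \brack \ell}_q = \frac{\prod_{j=1}^{m}(q^j-1)}{\prod_{j=1}^{\ell}(q^j-1)\,\prod_{j=1}^{m-\ell}(q^j-1)},
$$
from which part (a) is immediate, since the right-hand side is invariant under $\ell \mapsto m-\ell$.

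For part (c), I would work directly from the first closed form. Replacing $m$ by $m-1$ shows that the numerator of ${m-1 \brack \ell}_q$ is $\prod_{j=m-\ell}^{m-1}(q^j-1)$ over the same denominator $\prod_{j=1}^{\ell}(q^j-1)$. The factor $\frac{q^m-1}{q^{m-\ell}-1}$ then cancels the bottom-most numerator factor $(q^{m-\ell}-1)$ and reinstates the top factor $(q^m-1)$, so the numerator product slides from $\prod_{j=m-\ell}^{m-1}$ to $\prod_{j=m-\ell+1}^{m}$, which is exactly the numerator of ${m \brack \ell}_q$. This is a one-line telescoping cancellation once the closed form is in place.

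Part (b) is the only identity requiring a genuine computation, so I expect it to be the main (if modest) obstacle. The algebraic route is to bring the two terms on the right to the common denominator $\prod_{j=1}^{\ell}(q^j-1)$; after pulling out the common numerator factor $\prod_{j=m-\ell+1}^{m-1}(q^j-1)$, the whole identity collapses to verifying the scalar identity
$$
(q^{m-\ell}-1) + q^{m-\ell}(q^\ell - 1) = q^m - 1,
$$
which is immediate. Alternatively, I would give a combinatorial proof that fits the geometric theme of the paper: fixing the hyperplane $V_{m-1} = \langle v_1,\dots,v_{m-1}\rangle$, every $L \in G(\ell, V_m)$ either lies in $V_{m-1}$, contributing ${m-1 \brack \ell}_q$, or meets $V_{m-1}$ in an $(\ell-1)$-dimensional subspace $W$ (since $L \not\subseteq V_{m-1}$ forces $L + V_{m-1} = V_m$ and hence $\dim(L \cap V_{m-1}) = \ell - 1$). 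For each of the ${m-1 \brack \ell-1}_q$ choices of $W$ there are exactly $(q^m - q^{m-1})/(q^\ell - q^{\ell-1}) = q^{m-\ell}$ subspaces $L$ with $L \cap V_{m-1} = W$, giving the term $q^{m-\ell}{m-1 \brack \ell-1}_q$.

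The only points demanding care are the bookkeeping of index ranges in the product form and, for the combinatorial argument, the count $q^{m-\ell}$ of $\ell$-subspaces extending a fixed $W$; neither is a substantive difficulty, so the proposition should follow entirely from elementary factoring.
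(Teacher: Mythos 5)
Your proposal is correct, and it is more self-contained than what the paper actually does: for parts (a) and (b) the paper simply cites \cite[Section 3.2]{A} and offers no argument, and for (c) it only remarks that the identity ``follows trivially from \eqref{gb}'' --- which is precisely your telescoping cancellation, so there you coincide. For (a) and (b) you supply genuine proofs where the paper defers to a reference. Your symmetric product form $\prod_{j=1}^{m}(q^j-1)\big/\bigl(\prod_{j=1}^{\ell}(q^j-1)\prod_{j=1}^{m-\ell}(q^j-1)\bigr)$ and the scalar check $(q^{m-\ell}-1)+q^{m-\ell}(q^\ell-1)=q^m-1$ are both accurate, including the index bookkeeping. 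It is worth noting that your alternative combinatorial proof of (b) --- splitting $G(\ell,V_m)$ according to whether $L$ lies in a fixed hyperplane $V_{m-1}$ and counting $q^{m-\ell}$ extensions of each $(\ell-1)$-dimensional $W\subset V_{m-1}$ --- is exactly the decomposition the paper itself constructs in Section \ref{dec} (equation \eqref{t}, where $T(\ell,m)$ is cut into $q^{m-\ell}$ strings each in bijection with $G(\ell-1,V_{m-1})$), so that route is not only valid but anticipates the structural idea on which the paper's main arguments rest.
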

We refer the reader to \cite[Section 3.2]{A} for proofs of (a) and (b) above. 
The identity in (c) follows trivially from  \eqref{gb}. As usual, when an ordered basis $\B$ of $V_m$ over $\Fq$ and an ordering of elements of $I(\ell, m)$ is fixed,  the identification of a Schubert cell $C_\a$ with an affine space of dimension $\delta (\a)$ over $\Fq$ along with equation \eqref{Sch} implies
\begin{equation}\label{schcard}
|C_\a| = q^{\delta (\a)} \ \ \ \text{and} \ \ \  |\Omega_\a| = \sum_{\beta \in \nabla (\a)} q^{\delta (\beta)}.
\end{equation}
From now on, we shall denote $|\Omega_\a|$ over $\Fq$ by $n_\a$.

\section{A decomposition of $G(\ell, V_{m})$} \label{dec}
As above, let $\ell, m$ be positive integers with $1 \le \ell \le m$. Let $V_m$ be an $m$-dimensional vector space over $\FF$. The object $G(m, V_m)$ being trivial, let us assume that $1 \le \ell \le m-1$. For ease of notation, let us denote by $\V_{m-1}$ the set of all subspaces of $V_m$ of dimension $m-1$.  
Let $W_{m-1} \in \V_{m-1}$, that is, $W_{m-1}$ is an arbitrary $(m-1)$-dimensional subspace of $V_m$. If $1 \le \ell \le m-1$, then the Grassmannian $G(\ell, W_{m-1})$ is naturally a subset of $G(\ell, V_m)$: Indeed, any $\ell$-dimensional subspace of $W_{m-1}$ is an $\ell$-dimensional subset of $V_m$. On the other hand, the Grassmannian $G(\ell, W_{m-1})$ can be identified as a Schubert variety in the following way. Fix a basis $\B' = \{v_1, \dots, v_{m-1} \}$ of $W_{m-1}$ and extend it to a basis $\B = \{v_1, \dots, v_m\}$ of $V_m$. We also fix an ordering of the elements of $I(\ell, m)$. The Grassmannian $G(\ell, W_{m-1})$ is, in fact, equal to the Schubert subvariety $\Omega_{\gamma}$, where $\gamma = (m- \ell, m - \ell+1,  \dots, m-1)$; that is, $\gamma$ consists of the top $\ell$ elements of $\{1, \dots, m-1\}$. 

Under the correspondence in \eqref{em}, an element $L \in G(\ell, V_m)$  belongs to $G(\ell, W_{m-1})$ if and only if the last column of  $M_{\mathcal{B}}(L)$ is zero, that is, the pivot in the last row of $M_{\mathcal{B}}(L)$ does not occur in the last column. Consequently, the elements of $G(\ell, V_m) \setminus G(\ell, W_{m-1})$ correspond exactly to matrices in $\mathcal{M}(\ell, m)$ whose last pivot occurs  in the last column, that is, those indexed by $\beta \in I(\ell, m)$ with $\beta_\ell = m$. As noted in the proof of Proposition \ref{prop:glm-1} below, this is equivalent to $\beta \not\leq \gamma$, 
that is, $\beta \in \Delta(\gamma)$, where $\gamma$ is as defined above. We have  
\begin{equation}\label{glm-1}
 G(\ell, W_{m-1}) = G(\ell, V_m) \cap V(\{X_{\beta} : \beta \in \Delta (\gamma)\}) \ \ \text{and} \ \    G(\ell, V_m) = G(\ell, W_{m-1}) \sqcup \bigsqcup_{\beta \in \Delta (\gamma)} C_\beta.
\end{equation}

We have the following Proposition. 

\begin{proposition}\label{prop:glm-1}
    Let $\Pi$ be a hyperplane of $\PP\left(\bigwedge^\ell V_{m} \right)$ and $W_{m-1}$ be an $(m-1)$-dimensional subspace of $V_m$. 
    \begin{enumerate}
        \item[(a)]  If $\Pi$ contains $G(\ell, W_{m-1})$, then there exists a basis $\B$ of $V_m$ and an ordering of elements in $I(\ell, m)$ such that $\Pi$ is given by an equation of the form $F(X_\a : \a \in I(\ell, m)) = 0$, where
        $$F(X_\a : \a \in I(\ell, m)) = \sum_{\substack{\a \in I(\ell, m) \\ \a_\ell = m}} c_\a X_\a = \sum_{\a \in \Delta (\gamma)} c_\a X_\a,$$ where $X_\a$ are the homogeneous coordinates of $\PP \left( \bigwedge^{\ell} V_m \right)$  defined with respect to $\B$. In particular, the variable $X_\a$ occurs in the defining polynomial of such a hyperplane if and only if $\a_\ell = m$.
        \item[(b)]  If $\Pi$ does not contain $G(\ell, W_{m-1})$, then $\Pi$ restricts to a hyperplane $\Pi_\gamma$ on $\PP^{k_\gamma - 1}$ and $\Pi \cap G(\ell, W_{m-1}) = \Pi_\gamma \cap G(\ell, W_{m-1})$. 
    \end{enumerate}
   \end{proposition}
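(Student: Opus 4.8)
The plan is to reduce both parts to the non-degeneracy of the Schubert variety $\Omega_\gamma$ recorded in Remark \ref{rem1}, after identifying $G(\ell, V_{m-1})$ with an appropriate coordinate Schubert variety. The first step, common to both parts, is to choose a basis of $V_m$ adapted to $V_{m-1}$: I would pick a basis $\B' = \{v_1, \dots, v_{m-1}\}$ of $V_{m-1}$, extend it to a basis $\B = \{v_1, \dots, v_m\}$ of $V_m$, and fix an ordering of $I(\ell, m)$. With respect to $\B$, equation \eqref{glm-1} shows that $G(\ell, V_{m-1}) = \Omega_\gamma$ with $\gamma = (m-\ell, \dots, m-1)$, that $\nabla(\gamma) = \{\beta \in I(\ell,m) : \beta_\ell \le m-1\}$, and that $\Delta(\gamma) = \{\beta \in I(\ell, m) : \beta_\ell = m\}$. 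In the Pl\"ucker coordinates attached to $\B$, I write $\Pi$ as the zero locus of $F = \sum_{\a \in I(\ell, m)} c_\a X_\a$.

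For part (a), I assume $\Pi \supseteq G(\ell, V_{m-1}) = \Omega_\gamma$. Then Remark \ref{rem1}, applied with $\a = \gamma$, forces $c_\beta = 0$ for every $\beta \in \nabla(\gamma)$, that is, for every $\beta$ with $\beta_\ell \le m-1$. Consequently the only possibly nonzero coefficients are those with $\beta_\ell = m$, so $F = \sum_{\a_\ell = m} c_\a X_\a$, which is exactly the asserted form. The point worth emphasizing is that this conclusion is genuinely a statement about the adapted basis $\B$: it is precisely the choice of $\B$ extending a basis of $V_{m-1}$ that realizes $G(\ell, V_{m-1})$ as the coordinate Schubert variety cut out by the vanishing of $X_\beta$ for $\beta_\ell = m$, which is what allows non-degeneracy to be invoked coordinatewise.

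For part (b), I assume $\Pi \not\supseteq \Omega_\gamma$. By the same remark the restriction $F_\gamma = \sum_{\beta \in \nabla(\gamma)} c_\beta X_\beta$ is nonzero, for otherwise all $c_\beta$ with $\beta \in \nabla(\gamma)$ would vanish and $\Pi$ would contain $\Omega_\gamma$; hence $F_\gamma = 0$ defines a genuine hyperplane $\Pi_\gamma$ in $\PP^{k_\gamma - 1}$. It then remains to compare the two intersections with $G(\ell, V_{m-1}) \subseteq \PP^{k_\gamma - 1}$. For any point $P$ of $\Omega_\gamma$ one has $X_\beta(P) = 0$ for all $\beta \in \Delta(\gamma)$ by the very definition of $\PP^{k_\gamma - 1}$, so $F(P) = F_\gamma(P)$; thus $P \in \Pi$ if and only if $P \in \Pi_\gamma$, which gives $\Pi \cap G(\ell, V_{m-1}) = \Pi_\gamma \cap G(\ell, V_{m-1})$.

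I do not anticipate a serious obstacle here: once $G(\ell, V_{m-1})$ is recognized as the coordinate Schubert variety $\Omega_\gamma$, both parts are immediate specializations of Remark \ref{rem1} to $\a = \gamma$. The only point requiring care is the bookkeeping with the index sets, namely confirming that $\Delta(\gamma)$ is exactly $\{\beta : \beta_\ell = m\}$ as already noted after \eqref{glm-1}, together with being explicit that the normal form in part (a) is available only after passing to the adapted basis $\B$.
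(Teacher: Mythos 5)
Your proposal is correct and follows essentially the same route as the paper: choose a basis of $V_m$ extending one of $V_{m-1}$ so that $G(\ell, V_{m-1})$ becomes the coordinate Schubert variety $\Omega_\gamma$ with $\gamma = (m-\ell, \dots, m-1)$, and then apply the non-degeneracy statement of Remark \ref{rem1} with $\a = \gamma$ to get both the vanishing of the coefficients $c_\beta$ for $\beta \in \nabla(\gamma)$ in part (a) and the identification $\Pi \cap G(\ell, V_{m-1}) = \Pi_\gamma \cap G(\ell, V_{m-1})$ in part (b). The index bookkeeping ($\Delta(\gamma) = \{\beta : \beta_\ell = m\}$) is handled exactly as in the paper.
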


\begin{proof} 
As usual, we choose an ordered basis $\B' = \{v_1, \dots, v_{m-1}\}$ of $W_{m-1}$ and extend it to an ordered basis $\B = \{v_1, \dots, v_m\}$ of $V_m$. We also fix an ordering of elements of $I(\ell, m)$. Let $\Pi$ be a hyperplane of $\PP\left(\bigwedge^\ell V_m \right)$ given by the polynomial $F(X_\a : \a \in I(\ell, m)) = \displaystyle{\sum_{\a \in I(\ell, m)}} c_\a X_\a$ with respect to the basis $\B$ of $V_m$. In view of Remark \ref{rem1}, we see that $\Pi$  contains $G(\ell, W_{m-1})$ if and only if $c_\beta = 0$ for all $\beta \in \nabla (\gamma)$. Thus 
$$F(X_\a : \a \in I(\ell, m)) = \sum_{\a \in \Delta (\gamma)} c_\a X_\a.$$
This proves part (a) observing that $\beta \in \Delta (\gamma)$ if and only if $\beta_\ell = m$, as noted above. For (b), Remark \ref{rem1} implies that indeed $\Pi$ restricts to a hyperplane $\Pi_\gamma$ in $\PP^{k_{\gamma} - 1}$. Clearly, $\PP^{k_{\gamma} - 1}$ is equal to $\PP \left(\bigwedge^{\ell} W_{m-1} \right)$ and 
$$G(\ell, W_{m-1}) \cap \Pi = G(\ell, W_{m-1}) \cap \PP^{k_{\gamma} -1} \cap \Pi = G(\ell, W_{m-1}) \cap \Pi_\gamma.$$
This completes the proof. 
\end{proof}
As before, let $W_{m-1}$ denote an $(m-1)$-dimensional subspace of $V_m$.  We continue with the fixed ordered basis  $\mathcal{B}= \{v_1, \dots, v_m\}$ of $V_m$ and the fixed ordering of the elements in $I(\ell, m)$. Under the correspondence in \eqref{em}, the elements of  $G(\ell, W_{m-1})$ correspond to matrices in $\M (\ell, m)$ whose last column is zero, and hence, the elements of $G(\ell, V_{m}) \setminus G(\ell, W_{m-1})$ correspond to matrices in  
$$T(\ell, m) := \{M \in \M(\ell, m) : \ \text{the pivot in the last row of} \ M \ \text{occurs in the last column}\}.$$ 
Let $M \in T(\ell, m)$.  We may write
\begin{equation}\label{cm}
    M = \begin{pmatrix}
        \widecheck{M} &&  {\bf 0} \\ 
          c_M && 1
    \end{pmatrix},
\end{equation}
where $\widecheck{M} \in \M(\ell - 1, m-1)$, the symbol ${\bf 0}$ denotes a column matrix of length $(\ell - 1)$ with all entries equal to ${0}$, whereas $c_M$ is a row vector of length $m-1$, and of course $1 \in \Fq$.  Indeed, since $M$ is in right-row-reduced echelon form, so is $\widecheck{M}$, which justifies the inclusion $\widecheck{M} \in \M(\ell - 1, m-1)$. The pivot columns of $\widecheck{M}$ are determined by those of $M$, and the entries in $c_M$ on the pivot columns of $\widecheck{M}$ are zero. 
 
Let us denote $(M_{n_1}, \dots, M_{n_{m-\ell}})$ the entries in the non-pivotal columns of $c_M$, where $n_1 < \cdots < n_{m-\ell}$. This defines a map 
$$s : T (\ell, m) \to \FF^{m-\ell}.$$
The map $s$ is clearly surjective. We claim that $s^{-1} (\nu)$ can be identified with $\M (\ell- 1, m-1)$ for any $\nu = (\nu_1, \dots, \nu_{m-\ell})  \in \FF^{m-\ell}$. 
Define 
\begin{equation}\label{bij2}
\phi_\nu : \M (\ell-1, m-1) \to s^{-1} (\nu) \ \ \ \text{by} \ \ \ M' \mapsto \begin{pmatrix}
        {M'} &&  0 \\ 
          c_{M', \nu} && 1 \end{pmatrix},\end{equation}
where $c_{M', \nu}$ is uniquely determined by the non-pivot positions of $M'$ and $\nu$. Clearly, the map $\phi_\nu$ is a bijection. Since the preimages of distinct elements are disjoint, we have
\begin{equation}\label{t}
  G(\ell, V_m) = G(\ell, W_{m-1}) \bigsqcup T(\ell, m)  \ \ \ \text{and} \ \ \         T(\ell, m)  = \bigsqcup_{\nu \in \FF^{m-\ell}} s^{-1} (\nu).  
\end{equation}

We call the fibre $s^{-1} (\nu)$  \emph{the $\nu$-th string\footnote{An absolutely unimportant but fun remark: We use the term ``strings'' as an informal metaphor: think of $G(\ell,m)$ as a guitar, with $G(\ell,W_{m-1})$ forming the body and $G(\ell,m) \setminus G(\ell,W_{m-1})$ forming the fretboard. The Schubert cells act like frets, while our decomposition in (24) slices the fretboard into ``strings.'' } of $V_m$ with respect to $W_{m-1}$} and the basis $\mathcal{B}$ of $V_m$. 
While it is not evident whether the strings qualify as (quasi)projective subvarieties of $G(\ell, V_m)$, we can still identify them with Grassmannians $G(\ell - 1, W_{m-1})$ via the bijections \eqref{em} and \eqref{bij2}. 

Let us illustrate a quick application of the above discussion in the context of hyperplane sections of the Grassmann variety $G(\ell, V_m)$. We keep working with an $(m-1)$-dimensional subspace $W_{m-1}$ of $V_m$ as above. To this end, let $\Pi$ be a hyperplane of $\PP(\bigwedge^\ell V_m)$ containing $G(\ell, W_{m-1})$. In view of the decomposition in \eqref{t}, it follows that
\begin{equation}\label{section}
    G(\ell, V_m) \cap \Pi = G(\ell, W_{m-1}) \bigsqcup \left( \bigsqcup_{\nu \in \FF^{m-\ell}} \left(s^{-1} (\nu) \cap \Pi\right)\right)
\end{equation}
From Proposition \ref{prop:glm-1}, we see that a homogeneous linear polynomial $F(X_\a : \a \in I(\ell, m))$ representing $\Pi$ is given by 
\begin{equation}\label{special}
F(X_\a : \a \in I(\ell, m)) = \sum_{\substack{\a \in I(\ell, m) \\ \a_\ell = m}} c_\a X_\a.
\end{equation}
Note that, under our setting, there is a natural \textit{projection} map
$\tau : G(\ell, V_m) \setminus G(\ell, W_{m-1}) \to G(\ell - 1, W_{m-1})$ given by the composition of the maps:
$$G(\ell, V_m) \setminus G(\ell, W_{m-1}) \longleftrightarrow T(\ell, m) \to \M(\ell - 1, m-1) \longleftrightarrow G(\ell - 1, W_{m-1}),$$
where the map $T(\ell, m) \to \M(\ell - 1, m-1)$ is given by $M \mapsto \widecheck{M}$. It follows that for every $\a = (\a_1, \dots, \a_\ell) \in I(\ell, m)$ with $\a_\ell = m$ and $L \in T(\ell, m)$, we have $X_\a (L) = X_{\widecheck{\a}} (\tau (L))$, where $\widecheck{\alpha} = (\a_1, \dots, \a_{\ell - 1}) \in I(\ell - 1, m-1)$. 
Thus
\begin{equation}\label{eval}
F(X_\a : \a \in I(\ell, m)) (L) = \sum_{\substack{\a \in I(\ell, m) \\ \a_\ell = m}} c_\a X_{\widecheck{\a}}(\tau(L)).
\end{equation}
Given a homogeneous linear polynomial $F(X_\a : \a \in I(\ell, m))$ as in \eqref{special}, let us denote by $\widecheck{F} (X_{\widecheck{\a}} : {\widecheck{\a}} \in I(\ell-1, m-1))$ the polynomial
$$\widecheck{F} (X_{\widecheck{\a}} : {\widecheck{\a}} \in I(\ell-1, m-1)) = \sum_{\substack{\a \in I(\ell, m) \\ \a_\ell = m}} c_\a X_{\widecheck{\a}}.$$
Thus for any hyperplane $\Pi$  in $\PP (\bigwedge^{\ell} V_m)$ containing $G(\ell, W_{m-1})$ given by the polynomial $F$ in \eqref{special}, we get a hyperplane $\widecheck{\Pi}$ of $\PP (\bigwedge^{\ell - 1} W_{m-1})$ given by $\widecheck{F}$. In view of \eqref{eval}, we get a one-to-one correspondence
\begin{equation}\label{check}
s^{-1} (\nu) \cap \Pi \leftrightarrow G(\ell - 1, W_{m-1}) \cap \widecheck{\Pi}.
\end{equation}
If $\FF = \Fq$, then it also follows from \eqref{check} that for all $\nu \in \FF^{m-\ell}$, the sets $s^{-1} (\nu) \cap \Pi$ have the same number of elements.  
We conclude this section with the following elementary observations. 

\begin{remark}\label{rem2}\normalfont
Suppose $F(X_\a : \a \in I(\ell, m))$ is a nonzero homogeneous linear polynomial in $X_\a$ with $\a_\ell = m$. As noted above, this is equivalent to the statement that the hyperplane $\Pi$ of $\PP (\bigwedge^{\ell} V_m)$ defined by the equation $F = 0$ contains $G(\ell, W_{m-1})$. Let $\widecheck{\Pi}$ and $\widecheck{F}$ be as above. 
    \begin{enumerate}
        \item[(a)] Suppose there exists $\nu \in \FF^{m-\ell}$ such that $F$ vanishes at all points of $s^{-1}(\nu)$. From \eqref{eval}, we see that $\widecheck{F}$ vanishes at all points of $G(\ell - 1, W_{m-1})$. Since $G(\ell - 1, W_{m-1})$ is not contained in any hyperplanes of $\PP(\bigwedge^{\ell-1} W_{m-1})$, it follows that $\widecheck{F}$ is the zero polynomial. Since the 
coefficients of $F$ and $\widecheck{F}$ are the same, we conclude 
that $F$ is also the zero polynomial, contradicting our assumption.

        \item[(b)] As discussed in the previous section, the polynomial $F(X_\a : \a \in I(\ell, m)) = \displaystyle{\sum_{\a \in I(\ell, m)}} c_\a X_\a$ identifies with the element $\displaystyle{\sum_{\a \in I(\ell, m)}} c_\a \epsilon (\a) v_{\a^{\mathsf{C}}} \in \PP \left(\bigwedge^{m-\ell} V_m\right)$. It is easily verified that $\widecheck{F}$ identifies with exactly the same element of $\PP (\bigwedge^{m-\ell} V_m)$. In particular, $\Pi$ is a decomposable hyperplane in $\PP (\bigwedge^{\ell} V_m)$ if and only if $\widecheck{\Pi}$ is a decomposable hyperplane in $\PP \left(\bigwedge^{\ell-1} W_{m-1}\right)$. 
    \end{enumerate}
\end{remark}

\section{The minimum distance and the second minimum distance of Grassmann codes}\label{sec:main}
In this section, we present our main result on the second minimum distance of Grassmann codes. We first take a step back to revisit Nogin's Theorem on the minimum distance of Grassmann codes and give an independent proof of the same. Throughout, we will assume that $m$ is a positive integer and $V_m$ is a vector space of dimension $m$ over a finite field $\Fq$. 
We begin with the following Lemma motivated by the so-called Zanella's lemma often used in the context of projective Reed-Muller codes. 
\begin{lemma}\label{zan}
    Let $\Pi$ be a hyperplane in $\PP\left(\bigwedge^{\ell} V_m \right)$. If $a = \displaystyle{\max_{W_{m-1} \in \V_{m-1}}} |\Pi \cap G (\ell, W_{m-1})|$, then $$|G(\ell, V_m) \cap \Pi| \le a \left(\frac{q^ m - 1}{q^{m - \ell} - 1}\right).$$
    Furthermore, if $a = |\Pi \cap G (\ell, W_{m-1})|$, for every $W_{m-1} \in \V_{m-1}$, then equality holds.
\end{lemma}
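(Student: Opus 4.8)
The plan is to prove both assertions at once by a double-counting (incidence) argument. I would consider the set of incident pairs
$$\mathcal{I} = \{(L, V_{m-1}) : V_{m-1} \in \V_{m-1}, \ L \in \Pi \cap G(\ell, V_m), \ L \subseteq V_{m-1}\}$$
and count $|\mathcal{I}|$ in two ways. First, fixing $V_{m-1} \in \V_{m-1}$, the set of $L$ with $(L, V_{m-1}) \in \mathcal{I}$ is exactly $\Pi \cap G(\ell, V_{m-1})$, since an $\ell$-dimensional subspace of $V_m$ contained in $V_{m-1}$ is precisely a point of $G(\ell, V_{m-1})$ (viewed inside $G(\ell, V_m)$). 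Hence
$$|\mathcal{I}| = \sum_{V_{m-1} \in \V_{m-1}} |\Pi \cap G(\ell, V_{m-1})|.$$

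On the other hand, fixing $L \in \Pi \cap G(\ell, V_m)$, I would count the number of $(m-1)$-dimensional subspaces $V_{m-1}$ with $L \subseteq V_{m-1} \subseteq V_m$. Such subspaces correspond bijectively to the hyperplanes of the quotient $V_m / L$, a vector space of dimension $m - \ell$ over $\Fq$; their number is therefore ${m - \ell \brack 1}_q = (q^{m-\ell}-1)/(q-1)$, which is \emph{independent of} $L$. This uniformity is the crucial point that makes the counting clean. Writing $N = |G(\ell, V_m) \cap \Pi|$, this gives
$$|\mathcal{I}| = N \cdot \frac{q^{m-\ell}-1}{q-1}.$$

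Equating the two expressions for $|\mathcal{I}|$, using $|\Pi \cap G(\ell, V_{m-1})| \le a$ for every $V_{m-1}$, and recalling $|\V_{m-1}| = {m \brack 1}_q = (q^m - 1)/(q-1)$, I would obtain
$$N \cdot \frac{q^{m-\ell}-1}{q-1} = \sum_{V_{m-1} \in \V_{m-1}} |\Pi \cap G(\ell, V_{m-1})| \le a \cdot \frac{q^m - 1}{q-1},$$
and rearranging yields $N \le a\,(q^m-1)/(q^{m-\ell}-1)$, as claimed. For the final assertion, if $|\Pi \cap G(\ell, V_{m-1})| = a$ for all $V_{m-1} \in \V_{m-1}$, then the middle inequality above becomes an equality, and so does the bound.

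The argument is almost entirely formal once the two enumeration facts are in place; the only step requiring genuine care is the count of $(m-1)$-dimensional subspaces through a fixed $L$, which I would justify via the quotient $V_m/L$ (equivalently, Proposition \ref{identities} already records $(q^m-1)/(q^{m-\ell}-1)$ as the relevant ratio). I do not anticipate a real obstacle here: the entire content of the lemma is the observation that every point of $G(\ell, V_m)$ lies in the same number of subspaces $V_{m-1}$, so that averaging the hyperplane sections $\Pi \cap G(\ell, V_{m-1})$ over $\V_{m-1}$ controls $|\Pi \cap G(\ell, V_m)|$.
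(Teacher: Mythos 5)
Your proposal is correct and is essentially identical to the paper's own proof: the same incidence set, the same double count (with the number of $V_{m-1}\supseteq L$ computed as $(q^{m-\ell}-1)/(q-1)$, which the paper writes as ${m-\ell \brack m-\ell-1}_q$), and the same bounding of each summand by $a$. Nothing further is needed.
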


\begin{proof}
    Consider the incidence set,
    $$\mathscr{I} = \{(L, W_{m-1}) : L \in G(\ell, V_{m}), \  W_{m-1} \in \V_{m-1}, \ L \in G(\ell, W_{m-1}) \cap \Pi \}.$$
    We count $|\mathscr{I}|$ in two ways. First note that,
    \begin{align}\label{i1}
        |\mathscr{I}| &= \sum_{L \in G(\ell, V_m) \cap \Pi} |\{W_{m-1} \in \V_{m-1}: L \subset W_{m-1} \subset V_m\}| \nonumber  \\
        &= |G(\ell, V_m) \cap \Pi| {m-\ell \brack m - \ell - 1}_q \nonumber \\
        &= |G(\ell, V_m) \cap \Pi| \left(\frac{q^{m - \ell} - 1}{q-1}\right). 
    \end{align}
Indeed the last equality follows Proposition \ref{identities} (a). 
    On the other hand, 
    \begin{equation}\label{i2}
        |\mathscr{I}| = \sum_{W_{m-1} \in \V_{m-1}} |\Pi \cap G(\ell, W_{m-1})| \le \sum_{{W_{m-1} \in \V_{m-1}}} a = a \left(\frac{q^m - 1}{q - 1}\right).
    \end{equation}
    The first assertion now follows trivially from \eqref{i1} and \eqref{i2}. The second assertion is trivial. 
\end{proof}

Let us record some combinatorial (in)equalities in the following proposition for the sake of ease of reference. First, for $1 \le \ell \le m$, we denote by
$$e (\ell, m): = {m \brack \ell}_q - q^{\ell (m - \ell)} \ \ \ \text{and} \ \ \ e' (\ell, m): = {m \brack \ell}_q - q^{\ell (m - \ell)} - q^{\ell (m - \ell) - 2}.$$

\begin{proposition}\label{ineq}
    Let $\ell, m$ be positive integers with $1 \le \ell \le m-1$. We have 
    \begin{enumerate}
        \item[(a)] $e (\ell, m-1) \left(\frac{q^m -1}{q^{m-\ell} - 1}\right) < e (\ell, m).$
        \item[(b)] ${m - 1 \brack \ell}_q + q^{m-\ell} e (\ell -1, m-1) = e (\ell, m). $
        \item[(c)] If $2 \le \ell \le m-2$, then $e' (\ell, m-1) \left(\frac{q^m -1}{q^{m-\ell} - 1}\right) < e' (\ell, m).$
        \item[(d)] If $2 \le \ell \le m-2$, then ${m - 1 \brack \ell}_q + q^{m-\ell} e' (\ell -1, m-1) = e' (\ell, m)$.
    \end{enumerate}
\end{proposition}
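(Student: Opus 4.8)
The plan is to reduce all four statements to the Gaussian binomial identities recorded in Proposition \ref{identities}, after which everything becomes elementary exponent arithmetic together with a single scalar inequality.

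First I would dispatch the equalities (b) and (d), which are pure computations. For (b), I substitute the definition $e(\ell-1,m-1) = {m-1 \brack \ell-1}_q - q^{(\ell-1)(m-\ell)}$ into the left-hand side, so that
$${m-1 \brack \ell}_q + q^{m-\ell}e(\ell-1,m-1) = {m-1 \brack \ell}_q + q^{m-\ell}{m-1 \brack \ell-1}_q - q^{m-\ell}q^{(\ell-1)(m-\ell)}.$$
Applying the Pascal-type identity Proposition \ref{identities}(b) collapses the first two summands to ${m \brack \ell}_q$, while the exponent in the last term simplifies via $(m-\ell) + (\ell-1)(m-\ell) = \ell(m-\ell)$, yielding exactly $e(\ell,m)$. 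Part (d) is identical in structure: the extra summand $-q^{m-\ell}q^{(\ell-1)(m-\ell)-2}$ carries exponent $\ell(m-\ell)-2$, which is precisely the additional term appearing in $e'(\ell,m)$. The hypothesis $2 \le \ell \le m-2$ merely guarantees $\ell-1\ge 1$ and $m-\ell \ge 2$, so that $e'(\ell-1,m-1)$ is defined and the exponent $(\ell-1)(m-\ell)-2$ is nonnegative.

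Next I would treat the inequalities (a) and (c). The key is Proposition \ref{identities}(c), which gives $\frac{q^m-1}{q^{m-\ell}-1}{m-1 \brack \ell}_q = {m \brack \ell}_q$; hence multiplying $e(\ell,m-1)$ by the factor $\frac{q^m-1}{q^{m-\ell}-1}$ turns the Gaussian-binomial part into ${m \brack \ell}_q$ and leaves only the power-of-$q$ correction to compare. After cancelling ${m \brack \ell}_q$ from both sides, part (a) becomes equivalent to $q^{\ell(m-1-\ell)}\frac{q^m-1}{q^{m-\ell}-1} > q^{\ell(m-\ell)}$, i.e. to the scalar inequality $\frac{q^m-1}{q^{m-\ell}-1} > q^\ell$ (here I use $\ell(m-\ell)-\ell(m-1-\ell)=\ell$). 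This last inequality is immediate: clearing denominators it reads $q^m-1 > q^m - q^\ell$, that is $q^\ell>1$, which holds since $q\ge 2$ and $\ell\ge 1$.

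The only point requiring a little care is part (c), where I would factor the correction terms before comparing: since $q^{\ell(m-1-\ell)}+q^{\ell(m-1-\ell)-2} = q^{\ell(m-1-\ell)-2}(q^2+1)$ and $q^{\ell(m-\ell)}+q^{\ell(m-\ell)-2} = q^{\ell(m-\ell)-2}(q^2+1)$, the common factor $q^2+1$ cancels and (c) reduces to exactly the same inequality $\frac{q^m-1}{q^{m-\ell}-1} > q^\ell$ already established. Thus there is no genuine obstacle; the whole proposition follows from Proposition \ref{identities} and the elementary fact $q^\ell>1$. The mild bookkeeping of matching exponents in (b) and (d), together with the observation that (a) and (c) collapse to one identical scalar inequality, are the only things to keep track of.
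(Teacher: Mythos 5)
Your proof is correct and follows essentially the same route as the paper: parts (a) and (b) are reduced to Proposition \ref{identities}(c) and (b) respectively, with the same cancellation down to $q^m-1>q^m-q^\ell$. The paper leaves (c) and (d) to the reader as ``similar,'' and your factoring out of $q^2+1$ is exactly the intended completion.
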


\begin{proof}
    \
    \begin{enumerate}
        \item[(a)] Note that \begin{align*}
        e (\ell, m-1) \left(\frac{q^m -1}{q^{m-\ell} - 1}\right) &=\left(\frac{q^m - 1}{q^{m-\ell} - 1} \right) \left( {m - 1 \brack \ell}_q - q^{\ell(m-1 - \ell)} \right) \\
        &= {m \brack \ell}_q - \left(\frac{q^m - 1}{q^{m-\ell} - 1}\right) q^{\ell(m-1 - \ell)} \\
        &< {m \brack \ell}_q - q^{\ell (m-\ell)} = e(\ell, m).
    \end{align*}
    The equality above follows from Proposition \ref{identities} (c). 
    while the inequality on the last line is equivalent to the inequality $(q^m - 1) > q^\ell (q^{m - \ell} - 1) = q^m - q^\ell$, which is trivially true as $\ell \ge 1$ and $q$ is a prime power. 
    \item[(b)] The left-hand side of the assertion is given by
        \begin{equation*}
        {m - 1 \brack \ell}_q + q^{m - \ell} \left({ m - 1 \brack \ell - 1}_q - q^{(\ell-1)(m - \ell)} \right) \\
       =  {m \brack \ell}_q - q^{\ell(m- \ell)} = e(\ell, m). 
    \end{equation*} 
    The first equality follows from  Proposition \ref{identities} (b). 
    \end{enumerate}
    We leave the proof of parts (c) and (d), which can be easily deduced in a way similar to parts (a) and (b) respectively, to the reader. 
\end{proof}

We are now ready to restate and prove Nogin's Theorem. 
\begin{theorem}[Nogin] \cite[Theorem 2]{N} \label{main}
Let $1 \le \ell \le m$ be positive integers. If $\Pi$ is a hyperplane in $\PP(\bigwedge^{\ell} V_m)$, then 
$|\Pi \cap G(\ell, V_m)| \le e(\ell, m)$. 
The equality holds if and only if  $\Pi$ is decomposable. 
\end{theorem}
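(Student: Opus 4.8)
The plan is to prove the inequality and the equality characterization together by induction on $m$, using the string decomposition of Section \ref{dec} and the two numerical estimates in Proposition \ref{ineq}. For the base I would treat $\ell = 1$ directly: here $G(1, V_m) = \PP(V_m)$, every hyperplane meets it in a $\PP^{m-2}$ of exactly ${m-1 \brack 1}_q = e(1, m)$ points, so the bound is always attained; and by Remark \ref{rem1:dec}(b) every element of $\bigwedge^{m-1} V_m$ is decomposable, so every hyperplane is decomposable, matching the equality clause. The case $\ell = m$ is vacuous, since $\PP(\bigwedge^m V_m) = \PP^0$ has no hyperplanes. For the inductive step I fix $2 \le \ell \le m-1$ and split according to the dichotomy suggested by Proposition \ref{prop:glm-1}: either (A) $\Pi$ contains $G(\ell, V_{m-1})$ for no $V_{m-1} \in \V_{m-1}$, or (B) there is some $V_{m-1} \in \V_{m-1}$ with $G(\ell, V_{m-1}) \subseteq \Pi$.

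In case (A), for every $V_{m-1}$ Proposition \ref{prop:glm-1}(b) identifies $\Pi \cap G(\ell, V_{m-1})$ with a hyperplane section of the smaller Grassmannian $G(\ell, V_{m-1}) \subseteq \PP(\bigwedge^\ell V_{m-1})$, which the inductive hypothesis bounds by $e(\ell, m-1)$. Hence $a := \max_{V_{m-1} \in \V_{m-1}} |\Pi \cap G(\ell, V_{m-1})| \le e(\ell, m-1)$, and Lemma \ref{zan} together with Proposition \ref{ineq}(a) gives
$$|\Pi \cap G(\ell, V_m)| \le e(\ell, m-1)\left(\frac{q^m - 1}{q^{m-\ell}-1}\right) < e(\ell, m),$$
a \emph{strict} inequality. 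In case (B) I fix such a $V_{m-1}$ and choose the basis $\B$ so that $\{v_1, \dots, v_{m-1}\}$ spans it; then the string decomposition \eqref{t} and the correspondence \eqref{check} give
$$|\Pi \cap G(\ell, V_m)| = {m-1 \brack \ell}_q + \sum_{\nu \in \Fq^{m-\ell}} |s^{-1}(\nu) \cap \Pi| = {m-1 \brack \ell}_q + q^{m-\ell}\, |G(\ell-1, V_{m-1}) \cap \widecheck{\Pi}|,$$
since by \eqref{check} each of the $q^{m-\ell}$ sets $s^{-1}(\nu) \cap \Pi$ is in bijection with the single set $G(\ell-1, V_{m-1}) \cap \widecheck{\Pi}$, for the hyperplane $\widecheck{\Pi}$ of $\PP(\bigwedge^{\ell-1} V_{m-1})$. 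Applying the inductive hypothesis to the pair $(\ell-1, m-1)$ and invoking Proposition \ref{ineq}(b) then yields $|\Pi \cap G(\ell, V_m)| \le {m-1 \brack \ell}_q + q^{m-\ell} e(\ell-1, m-1) = e(\ell, m)$, completing the bound.

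For the equality clause, the implication ``$\Pi$ decomposable $\Rightarrow$ equality'' I would establish directly and geometrically: writing $\Pi$ via a decomposable $z = w_1 \wedge \cdots \wedge w_{m-\ell}$ and setting $W = V_m(z) = \mathrm{span}(w_1, \dots, w_{m-\ell})$, so $\dim W = m-\ell$ by Remark \ref{rem1:dec}(a), the non-degenerate pairing \eqref{wedge} shows that $L = \mathrm{span}(\omega_1, \dots, \omega_\ell) \in \Pi$ if and only if $w_1 \wedge \cdots \wedge w_{m-\ell} \wedge \omega_1 \wedge \cdots \wedge \omega_\ell = 0$, i.e. $L \cap W \neq 0$. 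Since the $\ell$-subspaces $L$ with $L \cap W = 0$ are exactly the complements of $W$ in $V_m$, of which there are $q^{\ell(m-\ell)}$, we get $|\Pi \cap G(\ell, V_m)| = {m \brack \ell}_q - q^{\ell(m-\ell)} = e(\ell, m)$. Conversely, for ``equality $\Rightarrow$ decomposable,'' the strict inequality found in case (A) forces any $\Pi$ attaining $e(\ell, m)$ to lie in case (B); the case (B) computation then forces $|G(\ell-1, V_{m-1}) \cap \widecheck{\Pi}| = e(\ell-1, m-1)$, so by the inductive hypothesis $\widecheck{\Pi}$ is decomposable, and Remark \ref{rem2}(b) transfers decomposability back to $\Pi$.

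The main obstacle I anticipate is not any isolated computation but the bookkeeping that makes the induction close: one must check that the (A)/(B) dichotomy is exhaustive, that equality can occur \emph{only} in case (B) so that the descent to $\widecheck{\Pi}$ applies, and that the two reductions match numerically with Proposition \ref{ineq}(a),(b). Crucially, the equality characterization requires propagating the full statement—both the bound and the decomposability description—through the induction, and the indispensable link is Remark \ref{rem2}(b): it guarantees that decomposability is preserved under the passage between $\Pi$ and $\widecheck{\Pi}$, so that the inductive step yields not merely the weight bound but the classification of the minimum-weight codewords.
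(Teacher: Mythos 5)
Your proposal is correct and follows essentially the same route as the paper: induction on $m$, the dichotomy of Proposition \ref{prop:glm-1} giving a strict inequality via Lemma \ref{zan} and Proposition \ref{ineq}(a) in one case and the string decomposition \eqref{t}, \eqref{check} with Proposition \ref{ineq}(b) in the other, and Remark \ref{rem2}(b) to propagate decomposability through the induction. The only (welcome) addition is your explicit geometric proof of the forward implication ``decomposable $\Rightarrow$ equality'' by counting the $q^{\ell(m-\ell)}$ complements of $W = V_m(z)$, a step the paper leaves implicit.
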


\begin{proof}
    We will prove the assertion by induction on $m$. The assertions are trivial when $m = 1$. Thus we may assume that $m > 1$ and that the assertions are true for $G(\ell, W_{m-1})$ for every $W_{m-1} \in \V_{m-1}$.
    Note that, if $\ell = m$, then the assertion is trivial. Thus we may assume $1 \le \ell < m$. Let $\Pi$ be a hyperplane $\PP(\bigwedge^{\ell} V_m)$. We distinguish the proof into two cases. 

    \textbf{Case 1:} \emph{Suppose $\Pi$ does not contain $G(\ell, W_{m-1})$ for any $W_{m-1} \in \V_{m-1}$}. From induction hypothesis and Proposition \ref{prop:glm-1} (b), for any $W_{m-1} \in \V_{m-1}$, we have $|\Pi \cap G(\ell, W_{m-1})| = |\Pi_\gamma \cap G(\ell, W_{m-1})| \le e(\ell, m-1)$.
    Lemma \ref{zan} implies
    \begin{equation}
        |G(\ell, V_m) \cap \Pi| \le \left(\frac{q^m - 1}{q^{m-\ell} - 1} \right) e(\ell, m-1) < e(\ell, m) .
    \end{equation}
    The last equality follows from Proposition \ref{ineq} (a). 

    \textbf{Case 2:} \emph{There exists  $W_{m-1} \in \V_{m-1}$ such that $\Pi$ contains $G(\ell, W_{m-1})$}. From \eqref{section}, we have
    \begin{align*}
        |G(\ell, V_m) \cap \Pi| &= |G(\ell, W_{m-1})| + \sum_{\nu \in \Fq^{m-\ell}} |s^{-1} (\nu) \cap \Pi| \\
        & = {m - 1 \brack \ell}_q + \sum_{\nu \in \Fq^{m-\ell}} |G(\ell - 1, W_{m-1}) \cap \widecheck{\Pi}| \\
        & \le {m - 1 \brack \ell}_q + q^{m - \ell}e (\ell - 1, m-1) \\
       &=  e (\ell, m). 
    \end{align*}
 The second equality follows from the bijection in \eqref{check}, the inequality from the induction hypothesis, and the last equality is a consequence of Proposition \ref{ineq} (b).  It is clear from the proof, that a hyperplane $\Pi$ of $\PP (\bigwedge^{\ell} V_m)$ attains the bound if and only if $\Pi$ contains $G(\ell, W_{m-1})$ and $|\Pi \cap s^{-1} (\nu)| = e(\ell - 1, m-1)$ for all $\nu \in \Fq^{m-\ell}$. Thus $|G(\ell - 1, W_{m-1}) \cap \widecheck{\Pi}| = e(\ell - 1, m-1)$. By induction hypothesis, $\widecheck{\Pi}$ is decomposable. From Remark \ref{rem2} (b), we conclude that $\Pi$ is decomposable. This completes the proof. 
\end{proof}
Before proceeding further let us add a remark on the number of minimum weight codewords of the Grassmann codes. 

\begin{remark}\normalfont
As noted in the proof of Theorem~\ref{main}, a hyperplane $\Pi$ achieves the minimum weight of $C(\ell, m)$ if and only if 
$\Pi$ is decomposable. The set of decomposable hyperplanes in $\mathbb{P}(\bigwedge^\ell V_m)$ is in natural bijection with 
$G(m-\ell, V_m)$. Since each hyperplane gives rise to exactly $(q-1)$ codewords, we see that the number of minimum weight codewords of $C(\ell, m)$ over $\mathbb{F}_q$ equals
$$
|G(m-\ell, V_m)|(q-1) =  \binom{m}{\ell}_q (q-1).
$$
This was already established by Nogin in \cite[Corollary 4.5]{N}.
\end{remark}

We have the following Corollary.

\begin{corollary}\label{cor:glm-1}
    Let $\Pi$ be a hyperplane of $\PP\left(\bigwedge^\ell V_{m} \right)$ and $W_{m-1} \in \V_{m-1}$. We have: 
    \begin{enumerate}
        \item[(a)] If $\Pi$ does not contain $G(\ell, W_{m-1})$, then $$|G(\ell, W_{m-1}) \cap \Pi| \le  {m - 1 \brack \ell}_q - q^{\ell (m-1-\ell)}.$$
        \item[(b)] If $|\Pi \cap G(\ell, W_{m-1})| = {m - 1 \brack \ell}_q - q^{\ell (m-1-\ell)},$ for some $W_{m-1} \in \V_{m-1}$, then $\Pi$ is given by a homogeneous polynomial of the form $\displaystyle{\sum_{\a \in \Delta (\gamma)}} c_\a X_\a + X_\gamma$ with respect to a basis $\B$ of $V_m$, where $\gamma = (m - \ell, \dots, m - 1)$ as defined before. 
    \end{enumerate}
\end{corollary}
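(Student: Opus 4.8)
The plan is to obtain both parts as immediate consequences of Nogin's Theorem (Theorem~\ref{main}) applied to the $(m-1)$-dimensional space $V_{m-1}$, the bridge being the reduction in Proposition~\ref{prop:glm-1}(b). For part (a), since $\Pi$ does not contain $G(\ell, V_{m-1})$, Proposition~\ref{prop:glm-1}(b) says that $\Pi$ restricts to an honest hyperplane $\Pi_\gamma$ of $\PP^{k_\gamma - 1} = \PP(\bigwedge^{\ell} V_{m-1})$ with $\Pi \cap G(\ell, V_{m-1}) = \Pi_\gamma \cap G(\ell, V_{m-1})$. Applying Theorem~\ref{main} with $m$ replaced by $m-1$ to the hyperplane $\Pi_\gamma$ of the Pl\"ucker space of $G(\ell, V_{m-1})$ then yields $|\Pi_\gamma \cap G(\ell, V_{m-1})| \le e(\ell, m-1) = {m-1 \brack \ell}_q - q^{\ell(m-1-\ell)}$, which is exactly the desired bound.

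For part (b), suppose the bound in (a) is attained for some $V_{m-1}$. As $e(\ell, m-1) < {m-1 \brack \ell}_q = |G(\ell, V_{m-1})|$, the hyperplane $\Pi$ cannot contain $G(\ell, V_{m-1})$, so we are in the setting of (a) and $|\Pi_\gamma \cap G(\ell, V_{m-1})| = e(\ell, m-1)$. The equality clause of Theorem~\ref{main} then forces $\Pi_\gamma$ to be a decomposable hyperplane of $\PP(\bigwedge^{\ell} V_{m-1})$; that is, its defining element is $w_1 \wedge \cdots \wedge w_{m-1-\ell} \in \bigwedge^{m-1-\ell} V_{m-1}$ for suitable $w_i \in V_{m-1}$.

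I would finish by normalising the basis, which is the content of the (intended) conclusion: that there is a basis $\B$ of $V_m$ extending a basis of $V_{m-1}$ with respect to which $\Pi$ has the stated shape. Replace the basis of $V_{m-1}$ by one whose first $m-1-\ell$ vectors are $w_1, \dots, w_{m-1-\ell}$, extend it to a basis of $V_{m-1}$, and then to a basis $\B$ of $V_m$ by keeping the original $v_m$. In these coordinates the defining element of $\Pi_\gamma$ becomes the standard decomposable vector $v_1 \wedge \cdots \wedge v_{m-1-\ell}$, which by the identification \eqref{hypid} corresponds to the single Pl\"ucker coordinate $X_\gamma$ (indeed $\gamma$ is the complement of $(1, \dots, m-1-\ell)$ inside $\{1, \dots, m-1\}$), so $\Pi_\gamma$ is simply $X_\gamma = 0$. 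The decisive structural observation is that keeping $v_m$ fixed makes the induced automorphism of $\bigwedge^{\ell} V_m$ respect the splitting $\bigwedge^{\ell} V_m = \bigwedge^{\ell} V_{m-1} \oplus (\bigwedge^{\ell-1} V_{m-1} \wedge v_m)$; equivalently it mixes the coordinates $X_\beta$ with $\beta_\ell \le m-1$ (those indexed by $\nabla(\gamma)$) only among themselves, and likewise the coordinates $X_\a$ with $\a_\ell = m$ (those indexed by $\Delta(\gamma)$) only among themselves. Hence the $\nabla(\gamma)$-part of the equation of $\Pi$ is governed entirely by $\Pi_\gamma$ and equals $X_\gamma$, while the $\Delta(\gamma)$-part is some form $\sum_{\a \in \Delta(\gamma)} c_\a X_\a$; thus $\Pi$ is given by $\sum_{\a \in \Delta(\gamma)} c_\a X_\a + X_\gamma$, as claimed.

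The one genuinely nontrivial step is this block-diagonal behaviour of the change of coordinates: one must confirm that fixing $v_m$ preserves the two blocks $\nabla(\gamma)$ and $\Delta(\gamma)$ separately, so that normalising $\Pi_\gamma$ to $X_\gamma = 0$ both forces the surviving $\nabla(\gamma)$-coefficient to be precisely that of $X_\gamma$ and creates no other $\nabla(\gamma)$-terms. Everything else is a direct appeal to Theorem~\ref{main} together with Proposition~\ref{prop:glm-1}(b).
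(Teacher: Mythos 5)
Your argument is correct and follows essentially the same route as the paper: part (a) is Proposition~\ref{prop:glm-1}(b) combined with Theorem~\ref{main} for $V_{m-1}$, and part (b) uses the equality clause of Theorem~\ref{main} to make $\Pi_\gamma$ decomposable, then renormalises the basis of $V_{m-1}$ (extended to $V_m$) so that $F_\gamma = X_\gamma$ via the identification \eqref{hypid}. Your extra observation that a change of basis fixing the flag $V_{m-1} \subset V_m$ preserves the splitting of the Pl\"ucker coordinates into the blocks $\nabla(\gamma)$ and $\Delta(\gamma)$ is a detail the paper leaves implicit (it is the content of Remark~\ref{rem1}), and making it explicit is a welcome addition.
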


\begin{proof}
\
\begin{enumerate}
    \item[(a)] Follows from Proposition \ref{prop:glm-1} and Nogin's Theorem. 
    \item[(b)] From Proposition \ref{prop:glm-1} (b), the hyperplane $\Pi$ restricts to a hyperplane $\Pi_{\gamma}$ on $\PP^{k_{\gamma} - 1}$. It follows from Nogin's Theorem on the classification of minimum weight codewords of $C(\ell, m-1)$ that $\Pi_\gamma$ is decomposable. Thus there exist linearly independent elements $w_1, \dots, w_{m-1-\ell} \in W_{m-1}$ such that $\Pi_\gamma$ is given by $F_\gamma = w_1 \wedge \cdots \wedge w_{m-1-\ell}$. We extend $\{w_1, \dots, w_{m-1-\ell}\}$ to an ordered basis ${\B'} =\{w_1, \dots, w_{m-1}\}$ of $W_{m-1}$ and ${\B'}$ to an ordered basis ${\B} = \{w_1, \dots, w_m\}$ of $V_m$. We redefine the coordinates of $\PP\left(\bigwedge^\ell W_{m-1} \right)$ and $\PP\left(\bigwedge^\ell V_{m} \right)$ with respect to the basis ${\B}$ and write the equation of $\Pi$ as $F(X_\a : \a \in I(\ell, m)) = \sum_{\a \in I(\ell, m)} c_\a X_\a$. Since, in terms of wedge products, the hyperplane $\Pi_\gamma$ is given by $w_1 \wedge \dots \wedge w_{m-1-\ell}$, from the correspondence in \eqref{hypid}, we see that $\Pi_\gamma$ is given by the equation $\epsilon(\gamma) F_\gamma = 0$. The correspondence in \eqref{hypid} shows that $\epsilon(\gamma) F_{\gamma} = X_\gamma$. Thus $F = \displaystyle{\sum_{\a \in \Delta (\gamma)}} c_\a X_\a + X_\gamma$.
\end{enumerate}
This completes the proof. 
\end{proof}

We now proceed with our investigation towards determination of the second minimum weight of the Grassmann code $C(\ell, m)$. We observe that, if $\ell = 1$ or $\ell = m - 1$, then the Grassmannian $G(\ell, V_m)$ is a projective space $\PP^{m-1}$. Consequently, any hyperplane intersects it at exactly $p_{m-2}$ many points, where $p_{m-2} = \frac{q^{m-1} - 1}{q-1}$. Thus, we may assume that $2 \le \ell \le m - 2$. First, we have to refer to a very special Schubert subvariety of $G(\ell, V_m)$.
We continue with the fixed vector space $V_m$ of dimension $m$ over $\Fq$ with an ordered basis $\B = \{v_1, \dots, v_m\}$ and an ordering of the elements in $I(\ell, m)$, as in the beginning of this section. Consider the Schubert variety $\Omega_\theta$ with respect to the basis $\B$, where $\theta = (m - \ell -1, m - \ell + 2, m-\ell + 3, \dots, m)$, consisting of $m-\ell -1$ together with the top $\ell - 1$ integers in $\{1, \dots, m\}$. This particular $\theta$ is chosen because it is the unique 
element of $I(\ell, m)$ satisfying $|\Delta(\theta)| = \ell + 1$ and $\delta(\theta) = \ell(m - \ell) - 2$, which as we shall 
see is precisely what is needed to establish the second minimum weight formula. Let us first describe the subset $\Delta (\theta)$. Since for every $\beta = (\beta_1, \dots, \beta_\ell) \in I(\ell, m)$, we have $\beta_{i} \le m - \ell + i$, for  $i = 1, \dots, \ell$, we must have $\beta_1 \ge m- \ell$ for every $\beta \in \Delta (\theta)$. Thus $\Delta (\theta)$ consists of exactly $\ell + 1$ elements, denoted by $\theta_1, \dots, \theta_{\ell + 1}$, that are obtained by writing down the $(\ell + 1)$-tuple $(m - \ell, m - \ell + 1, \dots, m)$, and dropping the $i$-th entry for each $i = 1, \dots, \ell + 1$.
In particular, we have $\theta_1 = (m-\ell + 1, \dots, m)$ and  $\theta_{\ell + 1} = (m - \ell, \dots, m-1) = \gamma$, whereas 
$$\theta_i = (m - \ell, \dots, m - \ell + i - 2, m - \ell + i, \dots, m) \ \ \ \text{for each} \ \ \ i = 2, \dots, \ell.$$
From \eqref{commd1} and \eqref{Sch} we have,
\begin{equation}\label{thesch}
G(\ell, V_m) = \Omega_\theta \sqcup \bigsqcup_{i = 1}^{\ell + 1} C_{\theta_i} \ \ \ \text{and} \ \ \ \Omega_\theta = G(\ell, V_m) \cap V(\{X_{\theta_i} : i = 1, \dots, \ell+1\}).
\end{equation}
  We have the following Proposition. 
\begin{proposition}\label{schspec}
    Suppose $\Pi$ is a hyperplane of $\PP\left(\bigwedge^\ell V_{m} \right)$. Let $\Omega_\theta$ be as described above.
    \begin{enumerate}
        \item[(a)]  If $\Pi$ contains $\Omega_{\theta}$, then $\Pi$ is decomposable.  
        \item[(b)] If $\Pi$ does not contain $\Omega_\theta$, then $|\Pi \cap \Omega_\theta| \le n_\theta - q^{\delta(\theta)}$. 
    \end{enumerate}
\end{proposition}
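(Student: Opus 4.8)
The plan is to treat the two parts separately, reducing (b) to the already-known minimum distance of the Schubert code and establishing (a) by an explicit factorisation of the defining wedge. Before either, I would record the two pieces of combinatorics that both parts rest on. Since $\beta_i \le m-\ell+i$ for every $\beta \in I(\ell,m)$ and every $i$, and since $\theta_i = m-\ell+i$ for all $i \ge 2$ while the first coordinate of $\theta$ is $m-\ell-1$, the condition $\beta \not\le \theta$ collapses to $\beta_1 \ge m-\ell$; hence $\Delta(\theta)$ is exactly the set of $\ell+1$ tuples drawn from $\{m-\ell,\dots,m\}$ already listed in the excerpt, and $\delta(\theta) = \ell(m-\ell)-2$ (the maximal cell $(m-\ell+1,\dots,m)$ has $\delta = \ell(m-\ell)$, and $\theta$ lowers only its first coordinate, by $2$).

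For part (b), I would observe that when $\Pi$ does not contain $\Omega_\theta$, Remark \ref{rem1} guarantees that $\Pi$ restricts to a genuine hyperplane $\Pi_\theta$ of $\PP^{k_\theta-1}$ with $|\Pi \cap \Omega_\theta| = |\Pi_\theta \cap \Omega_\theta|$. The minimum-distance bound \eqref{minsch} applied to the Schubert code $C_\theta(\ell,m)$ then yields $|\Pi_\theta \cap \Omega_\theta| \le n_\theta - q^{\delta(\theta)}$, which is precisely the asserted inequality. Thus (b) is an immediate consequence of the known Schubert minimum distance together with Remark \ref{rem1}.

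For part (a), I would again start from Remark \ref{rem1}: if $\Pi$ contains $\Omega_\theta$, its defining form vanishes on all $\beta \in \nabla(\theta)$, so $F = \sum_{i=1}^{\ell+1} c_{\theta_i} X_{\theta_i}$ with the coefficients not all zero (as $\Pi$ is a hyperplane). The decisive computation is the complement: dropping the $i$-th entry $m-\ell+i-1$ from $(m-\ell,\dots,m)$ leaves $\theta_i \subset \{m-\ell,\dots,m\}$, so $\theta_i^{\mathsf{C}} = \{1,\dots,m-\ell-1\} \cup \{m-\ell+i-1\}$, which is already increasing. Via the identification \eqref{hypid}, the element of $\bigwedge^{m-\ell} V_m$ defining $\Pi$ is therefore $\sum_{i=1}^{\ell+1} c_{\theta_i}\, v_1 \wedge \cdots \wedge v_{m-\ell-1} \wedge v_{m-\ell+i-1} = v_1 \wedge \cdots \wedge v_{m-\ell-1} \wedge w$, where $w = \sum_{i=1}^{\ell+1} c_{\theta_i}\, v_{m-\ell+i-1}$. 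Because the $v_{m-\ell+i-1}$ are distinct basis vectors, $w \ne 0$; and since $w$ lies in the span of $v_{m-\ell},\dots,v_m$, the vectors $v_1,\dots,v_{m-\ell-1},w$ are linearly independent. Hence this is a nonzero decomposable element and $\Pi$ is decomposable.

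I expect the only genuine obstacle to be the bookkeeping in part (a): correctly identifying $\Delta(\theta)$ and, above all, computing the complements $\theta_i^{\mathsf{C}}$ so that the common block $v_1 \wedge \cdots \wedge v_{m-\ell-1}$ factors out cleanly. The factorisation succeeds precisely because all $\theta_i$ share the identical ``low'' complement $\{1,\dots,m-\ell-1\}$ and differ only in a single index from the top block $\{m-\ell,\dots,m\}$, so the representatives are already increasing and no sign ambiguities arise. Everything else is a direct appeal to Remark \ref{rem1}, the identification \eqref{hypid}, and the known Schubert minimum distance \eqref{minsch}.
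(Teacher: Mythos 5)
Your proposal is correct and follows essentially the same route as the paper: part (b) is the restriction argument via Remark \ref{rem1} combined with the Schubert minimum distance bound \eqref{minsch}, and part (a) is the identification of the defining form as $\sum_i c_{\theta_i} X_{\theta_i}$ followed by factoring the corresponding element of $\bigwedge^{m-\ell}V_m$ as $v_1 \wedge \cdots \wedge v_{m-\ell-1} \wedge w$. Your explicit computation of the complements $\theta_i^{\mathsf{C}}$ and the linear-independence check for $w$ simply spell out details the paper leaves implicit.
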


\begin{proof}
    \
    \begin{enumerate}
        \item[(a)] Recall from Remark~\ref{rem1} that a hyperplane $\Pi$ of $\PP\left(\bigwedge^{\ell} V_m\right)$ contains $\Omega_\theta$ if and only if in the defining polynomial the coefficients of $X_\beta$  vanish  for all $\beta \in \nabla(\theta)$, that is, the only nonzero 
coefficients are those corresponding to $\beta \in \Delta(\theta)$. 
Since $\Delta(\theta) = \{\theta_1, \ldots, \theta_{\ell+1}\}$, 
denoting the coefficient of $X_{\theta_i}$ by $c_i$, the 
equation of such a hyperplane takes the simple form 
$\sum_{i=1}^{\ell+1} c_i X_{\theta_i} = 0$.  In view of \eqref{hypid}, the defining element of $\Pi$ in $\bigwedge^{m-\ell} V_m$ is given by 
$$\sum_{i=1}^{\ell+1} \epsilon(\theta_i) c_i v_{\theta_i^{\mathsf{C}}} = v_1 \wedge \dots \wedge v_{m-\ell - 1} \wedge \left(\sum_{i = m-\ell}^m \epsilon(\theta_i) c_i v_i\right),$$
which is a decomposable element of $\bigwedge^{m - \ell} V_{m}$. 
\item[(b)] Take a hyperplane $\Pi$ of $\PP\left(\bigwedge^\ell V_{m} \right)$ given by the equation $\displaystyle{\sum_{\a \in I(\ell, m)}} c_\a X_\a = 0$ that does not contain $\Omega_\theta$. As noted in Remark \ref{rem1}, the hyperplane $\Pi$ restricts to a hyperplane $\Pi_\theta$ on $\PP^{k_\theta - 1}$. The inequality \eqref{minsch} implies 
$$  
    |\Pi \cap \Omega_\theta| = |\Pi_\theta \cap \Omega_\theta| \le n_{\theta} - q^{\delta (\theta)}.
$$
    \end{enumerate}
    This completes the proof.
\end{proof}

The quantities $n_{\theta}$ and $\delta ({\theta})$ can be computed  easily. First, from the description of $\theta_i$'s, we have 
\begin{equation}\label{deltheta}
\delta (\theta_i) = \sum_{j=0}^{\ell} (m- \ell + j) - (m - \ell + i - 1) - \frac{\ell(\ell + 1)}{2} = \ell (m - \ell) - (i-1),
\end{equation}
for all $i= 1, \dots, \ell + 1$. Consequently, we have, 
\begin{equation}\label{ntheta}
n_\theta = {m \brack \ell}_q - \sum_{i=1}^{\ell + 1} q^{\delta (\theta_i)} =  {m \brack \ell}_q - \sum_{i=1}^{\ell + 1} q^{\ell (m - \ell) - (i - 1)} = {m \brack \ell}_q - q^{\ell(m-\ell)} \sum_{i=1}^{\ell + 1} \frac{1}{q^{i-1}}.
\end{equation}
Furthermore, a simple calculation shows that $\delta (\theta) = \ell (m - \ell) - 2$.

\begin{proposition}\label{submax}
    Let $\ell, m$ be positive integers satisfying $2 \le \ell \le m-2$ and $\Pi$ be a nondecomposable hyperplane of $\PP\left(\bigwedge^\ell V_m\right)$ such that $|G(\ell, W_{m-1}) \cap \Pi|= e(\ell, m-1)$ for some  $W_{m-1} \in \V_{m-1}$. Then $|G(\ell, V_m) \cap \Pi| \le e' (\ell, m).$
\end{proposition}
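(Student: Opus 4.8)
\emph{Plan.} First I would note that the hypothesis forces $\Pi$ not to contain $G(\ell, V_{m-1})$: containment would give $|G(\ell, V_{m-1})\cap\Pi| = {m-1\brack\ell}_q > e(\ell, m-1)$. Hence Corollary \ref{cor:glm-1}(b) applies, and after the change of basis it produces I may assume $\Pi$ is cut out by
$$F = \sum_{\substack{\a\in I(\ell,m)\\ \a_\ell = m}} c_\a X_\a + X_\gamma, \qquad \gamma = (m-\ell, m-\ell+1, \dots, m-1),$$
with $\Pi_\gamma = \{X_\gamma = 0\}$ the decomposable section $v_1\wedge\cdots\wedge v_{m-\ell-1}$ of $G(\ell, V_{m-1})$. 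Writing $F_0 = \sum_{\a_\ell=m}c_\a X_\a$, one has $F_0\ne 0$ (otherwise $F=X_\gamma$ would itself be decomposable), so $\widecheck{F_0}\ne 0$; moreover, a direct check with Remark \ref{rem1:dec} shows that the nondecomposability of $\Pi$ \emph{forces} a term $c_{\a_0}X_{\a_0}$ in $F_0$ with $\widecheck{\a_0}=\a_0\setminus\{m\}\not\subseteq\gamma$ (a ``low-index'' term), since otherwise $F_0$ would identify with $v_1\wedge\cdots\wedge v_{m-\ell-1}\wedge(\sum_i c_{\theta_i}v_{\gamma_i})$ and $\Pi$ would be decomposable.

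Next I would split $G(\ell, V_m) = G(\ell, V_{m-1})\sqcup T(\ell, m)$ as in \eqref{t}, so that $|G(\ell, V_m)\cap\Pi| = e(\ell, m-1) + |T(\ell, m)\cap\Pi|$; by the arithmetic identities of Propositions \ref{identities} and \ref{ineq} the desired inequality is then equivalent to $|T(\ell, m)\setminus\Pi| \ge q^{\ell(m-\ell)} + q^{\ell(m-\ell)-2} - q^{\ell(m-\ell)-\ell}$. (Equivalently, one may bound $|\Pi\cap\Omega_\theta|$ by Proposition \ref{schspec}(b) and reduce to $\sum_{i=1}^{\ell+1}|C_{\theta_i}\setminus\Pi|\ge q^{\ell(m-\ell)}$.) The key computation is that, for each fixed $M'=\tau(L)\in G(\ell-1, V_{m-1})$, the value of $F$ along the string through $M'$ is $F(\phi_\nu(M')) = \widecheck{F_0}(M') + \Lambda_{M'}(\nu)$ with $\Lambda_{M'}$ \emph{linear} in $\nu$; this follows by expanding the minor $X_\gamma$ of $\phi_\nu(M')$ along its last row, the coefficients being the Pl\"ucker coordinates $X_{\widecheck{\theta_i}}(M')$. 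Thus each $M'$ is of one of three types: those with $\Lambda_{M'}\ne 0$ (set $A$), contributing $(q-1)q^{m-\ell-1}$ points to $T\setminus\Pi$; those with $\Lambda_{M'}=0=\widecheck{F_0}(M')$, contributing none; and those with $\Lambda_{M'}=0$ but $\widecheck{F_0}(M')\ne 0$ (set $B_1$), contributing the whole string of $q^{m-\ell}$ points. Hence $|T\setminus\Pi| = (q-1)q^{m-\ell-1}|A| + q^{m-\ell}|B_1|$.

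I would then identify the sets geometrically: $\Lambda_{M'}\ne 0$ exactly when $M'\cap V_{m-\ell-1}=0$, so $A=\{M'\cap V_{m-\ell-1}=0\}$ has cardinality $\tfrac{q^\ell-1}{q-1}\,q^{(\ell-1)(m-\ell-1)}$, and a short computation gives its contribution as exactly $q^{\ell(m-\ell)} - q^{\ell(m-\ell)-\ell}$. It therefore remains to produce the missing $q^{\ell(m-\ell)-2}$, i.e. to show $|B_1|\ge q^{(\ell-1)(m-\ell)-2}$, where $B=\{M'\cap V_{m-\ell-1}\ne 0\}$ and $B_1=\{M'\in B:\widecheck{F_0}(M')\ne 0\}$. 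The low-index term from the first step guarantees that $\widecheck{F_0}$ is not identically zero on $B$, and Nogin's Theorem \ref{main} applied to $\widecheck{F_0}$ on $G(\ell-1, V_{m-1})$ is available to control $\{\widecheck{F_0}=0\}$.

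The main obstacle is precisely this sharp bound $|B_1|\ge q^{(\ell-1)(m-\ell)-2}$ for general $\ell$. The global estimate $|\{\widecheck{F_0}=0\}\cap G(\ell-1,V_{m-1})|\le e(\ell-1,m-1)$ from Nogin's Theorem is too lossy once intersected with $B$, so one must instead decompose $B$ into Schubert cells and use the low-index term of $\widecheck{F_0}$ to pin down a top-dimensional cell of $B$ (of dimension $(\ell-1)(m-\ell)-2$) on which $\widecheck{F_0}$ is nonzero, summing the non-vanishing contributions across cells. For $\ell=2$ this is transparent: $A=\{[x]:(x_{m-2},x_{m-1})\ne 0\}$ and $B=\PP(V_{m-3})$, on which $\widecheck{F_0}$ restricts to a nonzero linear form whose non-vanishing locus is an affine space of exactly $q^{m-4}=q^{(\ell-1)(m-\ell)-2}$ points, and the two contributions sum to $q^{\ell(m-\ell)}$ with equality. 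For $\ell\ge 3$, $B$ is a genuine Schubert variety and the bookkeeping is more delicate; one must also verify that the ``pivot versus non-pivot'' distinction entering the definition of $\Lambda_{M'}$ leaves the cardinalities of $A$ and $B$ unchanged. Feeding the resulting counts into the identities of Proposition \ref{ineq} then yields $|G(\ell, V_m)\cap\Pi|\le e'(\ell, m)$.
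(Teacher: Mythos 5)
Your reduction is set up correctly: the hypothesis does force $\Pi \not\supseteq G(\ell,V_{m-1})$, Corollary \ref{cor:glm-1}(b) gives the normal form $F=\sum_{\a_\ell=m}c_\a X_\a+X_\gamma$, the string-by-string expansion $F(\phi_\nu(M'))=\widecheck{F_0}(M')+\Lambda_{M'}(\nu)$ is valid, the identification $\{\Lambda_{M'}\ne 0\}=\{L'\cap V_{m-\ell-1}=0\}$ can be justified (if $L'\cap V_{m-\ell-1}=0$ all pivots of $M'$ lie in columns $m-\ell,\dots,m-1$ and the unique non-pivot column among them carries coefficient $\pm1$; otherwise the relevant $(\ell-1)\times(\ell-1)$ minors all vanish), and your arithmetic correctly reduces the claim to $|B_1|\ge q^{(\ell-1)(m-\ell)-2}$. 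But that last inequality, which you yourself flag as ``the main obstacle,'' is a genuine gap: it is precisely the statement that a hyperplane not containing the Schubert variety $B=\Omega_{\beta^*}\subset G(\ell-1,V_{m-1})$, $\beta^*=(m-\ell-1,m-\ell+2,\dots,m-1)$ (note $\delta(\beta^*)=(\ell-1)(m-\ell)-2=\dim B$), misses at least $q^{\delta(\beta^*)}$ of its points. This is the minimum distance theorem for Schubert codes, inequality \eqref{minsch}, whose general proof (Xiang; Ghorpade--Singh) is substantially harder than a cell-by-cell count: the naive strategy of ``pin down a top-dimensional cell on which $\widecheck{F_0}$ is nonzero and sum contributions'' does not go through directly, because a linear form can vanish on the big cell of $B$ while being nonzero only on smaller cells, and the non-vanishing counts on individual cells are not individually bounded below by the right powers of $q$. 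So for $\ell\ge 3$ your argument is incomplete as written; it is repaired immediately by citing \eqref{minsch} for $\Omega_{\beta^*}$, but not by the sketched elementary bookkeeping.

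For comparison, the paper avoids the string decomposition in this proposition altogether. It decomposes $G(\ell,V_m)=\Omega_\theta\sqcup\bigsqcup_{i=1}^{\ell+1}C_{\theta_i}$ with $\theta=(m-\ell-1,m-\ell+2,\dots,m)$, observes that nondecomposability of $\Pi$ forces $\Pi\not\supseteq\Omega_\theta$ (Proposition \ref{schspec}(a)), applies \eqref{minsch} to get $|\Pi\cap\Omega_\theta|\le n_\theta-q^{\ell(m-\ell)-2}$, and then computes $|\Pi\cap C_{\theta_k}|=q^{\delta(\theta_k)-1}$ exactly for $k\le\ell$ and $|\Pi\cap C_{\theta_{\ell+1}}|=0$ using the normal form of $F$. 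Your route, once the missing bound is supplied by \eqref{minsch} applied one dimension down, is an equivalent repackaging of the same input: both proofs ultimately rest on the Schubert minimum distance formula for a codimension-two-type Schubert variety, which the paper explicitly acknowledges as the instrumental external ingredient.
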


\begin{proof}
    As observed in Corollary \ref{cor:glm-1}, there exists an ordered basis $\B = \{v_1, \dots, v_m\}$ of $V_m$, such that the hyperplane $\Pi$ is given by a linear polynomial $$F(X_\a : \a \in I(\ell, m)) = \sum_{\a \in \Delta (\gamma)} c_\a X_\a + X_\gamma, \ \ \text{where} \ \ \gamma = (m-\ell, \dots, m-1).$$ Note that it is essential to have the coefficient of $X_\gamma$ to be nonzero, as otherwise, the polynomial $F$ is decomposable. 
    
    Since $\Pi$ is not decomposable, it follows from the discussion in Proposition \ref{schspec} (a), that $\Pi$ does not contain the Schubert subvariety $\Omega_\theta$. From Proposition \ref{schspec} (b), we have 
    \begin{equation}\label{theta1}
    |\Pi \cap \Omega_\theta| \le n_\theta - q^{\ell (m-\ell) - 2}. 
    \end{equation}
    We claim  that $\Pi$ does not intersect $C_{\theta_{\ell+1}}$. First note that for all $\a \in \Delta (\gamma)$, the polynomial $X_\a$ vanish identically on $C_{\theta_{\ell + 1}}$. Thus the polynomial $F$ restricts to $X_\gamma$ on $C_{\theta_{\ell + 1}}$, which does not have any zeroes in that cell. Now fix $k \in \{1, \dots, \ell\}$ and an element $L \in C_{\theta_k}$. Write $M_\B (L) = (a_{ij})$ as the $\ell \times m$ matrix in the right-row-reduced-echelon form (as defined in subsection \ref{sec:gra}) representing $L$ with respect to the basis $\B.$ It follows that 
    $p_\gamma (M_\B (L))$ is either $a_{\ell, m-\ell + k - 1}$ or $-a_{\ell, m-\ell + k - 1}$. On the other hand, we see that  $\sum_{\a \in \Delta (\gamma)} c_\a p_\a (M_\B (L))$ does not involve any of the entries from the $\ell$-th row of $M_\B (L)$. Consequently, if $F( M_\B (L)) = 0$, then $a_{\ell, m-\ell + k - 1}$ is uniquely determined by all the other entries in $M_{\B} (L)$ by means of the equation. Since $C_{\theta_k}$-s are affine spaces of dimensions $\delta (\theta_k)$ for $k = 1, \dots, \ell$, we have
    \begin{equation}\label{theta2}
    |\Pi \cap C_{\theta_k}| = q^{\delta (\theta_k) - 1} \ \ \ \text{for all} \ \ k = 1, \dots, \ell.
    \end{equation}
    Combining \eqref{thesch}, \eqref{deltheta}, \eqref{ntheta}, \eqref{theta1}, and \eqref{theta2}, we have
    \begin{align*}
    |\Pi \cap G(\ell, V_m)| &= |\Pi \cap \Omega_\theta| + \sum_{k=1}^\ell |\Pi \cap C_{\theta_k}|  + |\Pi \cap C_{\theta_{\ell+1}}| \\
    &\le n_\theta - q^{\ell (m-\ell) - 2} + \sum_{i=1}^\ell q^{\delta (\theta_k) - 1} \\
    &= \left({m \brack \ell}_q - \sum_{k=1}^{\ell + 1} q^{\delta (\theta_k)} \right) - q^{\ell (m-\ell) - 2} + \sum_{k=1}^{\ell} q^{\delta (\theta_k) - 1} \\
    &= \left({m \brack \ell}_q - q^{\ell(m-\ell)} \sum_{k=1}^{\ell + 1} \frac{1}{q^{k-1}}\right) - q^{\ell (m-\ell) - 2} + q^{\ell(m-\ell)} \sum_{k=1}^{\ell} \frac{1}{q^{k}} \\
    &= {m \brack \ell}_q - q^{\ell (m - \ell)} - q^{\ell  (m - \ell) - 2}.
   \end{align*}
   This completes the proof.
    \end{proof}

\begin{remark}\label{rem:attained}\normalfont
It is clear from the proof of Proposition~\ref{submax} that 
the bound is attained by a hyperplane $\Pi$ if and only if 
$|\Pi \cap \Omega_\theta| = n_\theta - q^{\ell(m-\ell)-2}$.
While a complete classification of all hyperplanes attaining 
this bound is not known, we exhibit an explicit class. Consider 
hyperplanes of the form 
$$c_\theta X_\theta + \sum_{\a \in \Delta(\theta) \setminus 
\{\gamma\}} c_\a X_\a + X_\gamma = 0, \ \ \text{with} \ \ 
c_\theta \neq 0.$$
Since $\gamma \in \Delta(\theta)$, the polynomial 
$\sum_{\a \in \Delta(\theta) \setminus \{\gamma\}} c_\a X_\a 
+ X_\gamma$ vanishes identically on $\Omega_\theta$. Thus the 
restriction of $\Pi$ to $\Omega_\theta$ is given by 
$c_\theta X_\theta = 0$. Since $c_\theta \neq 0$, this defines 
a hyperplane section of $\Omega_\theta$, given by $X_\theta = 0$ 
in $\PP^{k_\theta - 1}$. Note that, for any $L \in \Omega_\theta$, 
$X_{\theta} (L) \neq 0$ if and only if $L \in C_{\theta}$. Thus, 
there are exactly $|C_\theta| = q^{\delta(\theta)}$ many points 
in $\Omega_\theta$ where $X_\theta$ does not vanish. 
Consequently, the hyperplane $\Pi$ intersects $\Omega_\theta$ 
at exactly $n_\theta - q^{\delta(\theta)} = n_\theta - 
q^{\ell(m-\ell)-2}$ many points. In particular, the bound in 
Proposition~\ref{submax} is optimal.
\end{remark}

\begin{corollary}\label{cor}
Let $\Pi$ be a nondecomposable hyperplane of $\PP\left(\bigwedge^{m-2} V_m\right)$. Then $$|G(m - 2, W_{m-1}) \cap \Pi|= e(m-2, m-1) \ \ \text{for all} \ \   W_{m-1} \in \V_{m-1}.$$ Consequently, we have $|G(m-2, V_m) \cap \Pi| \le e'(m-2, m).$
\end{corollary}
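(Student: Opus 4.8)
The plan is to exploit the special feature of the range $\ell = m-2$, namely that the inner Grassmannian $G(m-2, V_{m-1})$ is a codimension-one Grassmannian of the $(m-1)$-dimensional space $V_{m-1}$. Since $m-2 = (m-1)-1$, the discussion at the beginning of this section shows that $G(m-2, V_{m-1})$ is a projective space $\PP^{m-2}$ which coincides with the whole Pl\"ucker space $\PP(\bigwedge^{m-2} V_{m-1})$, so $|G(m-2, V_{m-1})| = {m-1 \brack 1}_q = \frac{q^{m-1}-1}{q-1}$. A short computation using Proposition \ref{identities}(a) then gives $e(m-2, m-1) = {m-1 \brack 1}_q - q^{m-2} = \frac{q^{m-2}-1}{q-1}$, which is precisely the number of points on a hyperplane $\PP^{m-3}$ of $\PP^{m-2}$.

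With these cardinalities in hand, I would record a clean dichotomy for the intersection $G(m-2, V_{m-1}) \cap \Pi$ using Proposition \ref{prop:glm-1}. Either $\Pi \supseteq G(m-2, V_{m-1})$, in which case the intersection is all of $\PP^{m-2}$; or $\Pi$ restricts to an honest hyperplane $\Pi_\gamma$ of $\PP^{k_\gamma-1} = \PP(\bigwedge^{m-2}V_{m-1}) = \PP^{m-2}$, in which case $G(m-2,V_{m-1}) \cap \Pi = \PP^{m-2} \cap \Pi_\gamma$ is a $\PP^{m-3}$ and so has exactly $e(m-2,m-1)$ points. Thus the first assertion reduces to showing that a nondecomposable $\Pi$ does not contain $G(m-2, V_{m-1})$.

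This containment step is the heart of the matter, and it is where I expect the main difficulty. Via the identification \eqref{hypid} and Remark \ref{rem1}, after choosing a basis adapted to $V_{m-1} = \langle v_1, \dots, v_{m-1}\rangle$, the containment $\Pi \supseteq G(m-2, V_{m-1})$ holds precisely when the defining element $z \in \bigwedge^2 V_m$ of $\Pi$ involves no basis vector $v_m$, that is, when $z \in \bigwedge^2 V_{m-1}$. One must therefore rule out a nondecomposable $z$ lying in such a $\bigwedge^2 V_{m-1}$. When $m = 4$ this is immediate: $\bigwedge^2 V_{m-1} = \bigwedge^2 V_3 = \bigwedge^{3-1}V_3$ consists entirely of decomposable $2$-vectors by Remark \ref{rem1:dec}(b), so $z \in \bigwedge^2 V_{m-1}$ would force $z$---and hence $\Pi$---to be decomposable, a contradiction. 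For general $m$ the argument must instead be phrased in terms of the rank (equivalently the support) of the $2$-vector $z$: a nonzero $z$ cannot be supported inside every hyperplane $V_{m-1}$, so there is always at least one $V_{m-1}$ with $z \notin \bigwedge^2 V_{m-1}$, and for that $V_{m-1}$ the intersection attains $e(m-2,m-1)$.

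Granting the first assertion, the ``consequently'' follows at once: $\Pi$ is nondecomposable and realises $|G(m-2, V_{m-1}) \cap \Pi| = e(m-2, m-1)$ for some $V_{m-1} \in \V_{m-1}$, so Proposition \ref{submax} applies with $\ell = m-2$ (legitimate since $2 \le m-2 \le m-2$) and yields $|G(m-2, V_m) \cap \Pi| \le e'(m-2, m)$. The single genuine obstacle is thus the containment step of the previous paragraph: correctly relating decomposability of the defining $2$-vector $z \in \bigwedge^2 V_m$ to the possibility of $z$ living in $\bigwedge^2 V_{m-1}$ for a hyperplane $V_{m-1}$ of $V_m$.
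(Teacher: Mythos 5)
Your reduction is the same as the paper's: identify $G(m-2,V_{m-1})$ with a projective space $\PP^{m-2}$, observe that a hyperplane either contains it or meets it in a $\PP^{m-3}$ with exactly $e(m-2,m-1)=\tfrac{q^{m-2}-1}{q-1}$ points, and then feed the resulting equality into Proposition \ref{submax}. You have also put your finger on exactly the right crux: whether a nondecomposable $\Pi$, i.e.\ a nondecomposable defining element $z\in\bigwedge^{2}V_m$, can lie in $\bigwedge^{2}V_{m-1}$ for a hyperplane $V_{m-1}$ of $V_m$.

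At that crux your proposal diverges from the paper, and your instinct is the correct one. The paper argues that if $\Pi\supseteq G(m-2,V_{m-1})$ then $F=\sum_{\a_{\ell}=m}c_\a X_\a$ and asserts that any such $F$ is decomposable; dually this is the claim that every nonzero element of $\bigwedge^{2}V_{m-1}$ is decomposable, which holds only when $\dim V_{m-1}\le 3$, i.e.\ $m=4$ (precisely your Remark \ref{rem1:dec}(b) argument). For $m\ge 5$ it fails: $z=v_1\wedge v_2+v_3\wedge v_4$ satisfies $\dim V_m(z)=0\neq 2$, so it is nondecomposable, yet the hyperplane it defines contains $G(m-2,V_{m-1})$ for every $V_{m-1}$ containing the $4$-dimensional support of $z$. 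Consequently the first displayed assertion of the Corollary, with its ``for all $V_{m-1}$'', is false for $m\ge 5$, and the paper's proof breaks at exactly the step you isolated. Your replacement --- the support of a nonzero $z$ is a nonzero subspace, hence is avoided by some hyperplane $V_{m-1}$, and for that $V_{m-1}$ one gets $|G(m-2,V_{m-1})\cap\Pi|=e(m-2,m-1)$ --- is correct, and it is all Proposition \ref{submax} requires, since that proposition only needs the equality for \emph{some} $V_{m-1}$. So your argument does prove the ``consequently'' bound $|G(m-2,V_m)\cap\Pi|\le e'(m-2,m)$, which is what Theorem \ref{sec} actually uses, while silently correcting both the intermediate assertion and its proof. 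The only detail to add to make your sketch complete is the one-line justification that the support of $z$ (the smallest subspace $U$ with $z\in\bigwedge^{2}U$) is well defined and nonzero, so a hyperplane $V_{m-1}$ not containing it exists.
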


\begin{proof}
   Let $W_{m-1} \in \V_{m-1}$. Note that $G(m-2, W_{m-1})$ is a projective space of dimension $m-2$. Consequently, if $\Pi$ is a hyperplane of $\PP \left(\bigwedge^{m-2} V_m \right)$, then either $\Pi$ contains $G(m-2, W_{m-1})$ or $\Pi$ intersects $G(m-2, W_{m-1})$ at a projective linear subspace of codimension $1$. That is, if $\Pi$ does not contain $G(m-2, W_{m-1})$, then $|\Pi \cap G(m-2, W_{m-1})| = \frac{q^{m-2} - 1}{q-1}$. On the other hand, $e (m-2, m-1) = {m-1 \brack m-2}_q - q^{m-2} = \frac{q^{m-2} - 1}{q-1}$. It is thus enough to show that $\Pi$ does not contain $G(m-2, W_{m-1})$. As usual, we fix a basis $\B' =\{v_1, \dots, v_{m-1}\}$ of $W_{m-1}$ and extend it to a basis $\B = \{v_1, \dots, v_m\}$ of $V_m$. With respect to this basis, we see that any hyperplane $\Pi$ of $\PP \left(\bigwedge^{m-2} V_m \right)$ is given by a polynomial 
   $$F (X_\a : \a \in I(m-2, m)) = \sum_{\a_\ell = m}{c_\a X_\a}.$$
   But it is easy to see that $F$ is decomposable which contradicts the hypothesis. 
\end{proof}

In the case when $\ell = 2$, the complete weight distribution of the Grassmann code $C(2, m)$ was determined by Nogin. In particular, the second minimum weight is already known. However, for the sake of completeness, we give an independent proof for the second minimum distance of $C(2, m)$. This is equivalent to the following Proposition. 

\begin{proposition}\label{l2}
    Let $\Pi$ be a hyperplane of $\PP\left(\bigwedge^2 V_m\right)$ such that $|\Pi \cap G(2, V_m)| < e (2, m)$. Then
    $|\Pi \cap G(2, V_m)| \le e' (2, m).$
    Moreover, the above bound is attained.
\end{proposition}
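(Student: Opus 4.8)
The plan is to first translate the hypothesis via Nogin's Theorem and then run an induction on $m$. By Theorem \ref{main}, the inequality $|\Pi \cap G(2, V_m)| < e(2,m)$ holds precisely when $\Pi$ is \emph{nondecomposable}. Hence the proposition is equivalent to the assertion that every nondecomposable hyperplane $\Pi$ of $\PP(\bigwedge^2 V_m)$ satisfies $|\Pi \cap G(2, V_m)| \le e'(2,m)$, together with sharpness of this bound. I would establish the inequality by induction on $m \ge 4$, taking the base case $m=4$ (where $2 = m-2$) directly from Corollary \ref{cor}, which handles the $\ell = m-2$ situation.

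The decisive step of the inductive argument is an observation special to $\ell = 2$: a nondecomposable hyperplane $\Pi$ can contain $G(2, V_{m-1})$ for \emph{no} $V_{m-1} \in \V_{m-1}$. Indeed, if $\Pi \supseteq G(2, V_{m-1})$, then after extending a basis of $V_{m-1}$ to one of $V_m$, Proposition \ref{prop:glm-1}(a) forces the defining polynomial of $\Pi$ into the shape $\sum_{i=1}^{m-1} c_i X_{im}$, which Remark \ref{rem1:dec} shows to be decomposable, contradicting our standing assumption. (Equivalently one may invoke Remark \ref{rem2}(b) together with the fact that every hyperplane of $\PP(\bigwedge^1 V_{m-1})$ is decomposable.) This is exactly the structural simplification that makes $\ell = 2$ tractable: the awkward branch of the proof of Theorem \ref{main}, in which $\Pi$ swallows an entire sub-Grassmannian and picks up ${m-1 \brack 2}_q$ points, simply cannot arise here.

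With that branch eliminated, Proposition \ref{prop:glm-1}(b) guarantees that for each $V_{m-1}$ the hyperplane $\Pi$ restricts to a hyperplane $\Pi_\gamma$ of $\PP(\bigwedge^2 V_{m-1})$ with $|\Pi \cap G(2, V_{m-1})| = |\Pi_\gamma \cap G(2, V_{m-1})|$. I would then split on the value $a = \max_{V_{m-1} \in \V_{m-1}} |\Pi \cap G(2, V_{m-1})|$. If $a \le e'(2, m-1)$, then Lemma \ref{zan} gives $|\Pi \cap G(2, V_m)| \le a\left(\frac{q^m-1}{q^{m-2}-1}\right) \le e'(2,m-1)\left(\frac{q^m-1}{q^{m-2}-1}\right) < e'(2,m)$, the final inequality being Proposition \ref{ineq}(c). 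If instead $a > e'(2, m-1)$, let $V_{m-1}$ be a maximizing subspace; its count is at most $e(2,m-1)$ by Theorem \ref{main}, while the inductive hypothesis applied to $\Pi_\gamma$ forces any count strictly below $e(2,m-1)$ to be at most $e'(2,m-1)$. A count exceeding $e'(2,m-1)$ must therefore equal $e(2,m-1)$, and Proposition \ref{submax} applies verbatim to yield $|\Pi \cap G(2, V_m)| \le e'(2,m)$, closing the induction. Sharpness of the bound then follows from the explicit family of hyperplanes exhibited in Remark \ref{rem:attained}, each of which is nondecomposable and attains $e'(2,m)$.

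I expect the main obstacle to be organizing the case split so that the two alternatives are exhaustive and dovetail correctly. The delicate point lies in the second case, where one must argue that a count ``$> e'(2,m-1)$'' is in fact forced to be ``$= e(2,m-1)$'' rather than some intermediate value; this is precisely the content of the inductive hypothesis and is what licenses the transition to Proposition \ref{submax}. Conceptually, the crux is the $\ell = 2$ non-containment fact of the second paragraph, which, though short, is what collapses the whole argument onto the already-available ingredients (Lemma \ref{zan}, Proposition \ref{submax}, and the inequalities of Proposition \ref{ineq}); the remaining steps are routine bookkeeping.
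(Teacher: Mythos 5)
Your proposal is correct and follows essentially the same route as the paper: nondecomposability via Theorem \ref{main}, the observation that a nondecomposable $\Pi$ contains no $G(2,V_{m-1})$, then induction on $m$ combining Lemma \ref{zan}, Proposition \ref{ineq}(c) and Proposition \ref{submax}, with sharpness from Remark \ref{rem:attained}. The only cosmetic differences are that you take the base case $m=4$ from Corollary \ref{cor} (which the paper's own footnote notes is an equally valid alternative to its direct count) and phrase the case split via the maximum $a$ rather than via the existence of a subspace with count $e(2,m-1)$, which is logically equivalent.
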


\begin{proof}
    From Theorem \ref{main}, we see that $\Pi$ is not decomposable. Using Remark \ref{rem1} and \ref{rem1:dec}, we conclude that $\Pi$ does not contain $G(2, W_{m-1})$ for any $W_{m-1} \in \V_{m-1}$. We now prove the assertion by induction on $m$. First, suppose $m = 4$ \footnote{this is a consequence of Corollary \ref{cor}, but we give a different proof} and let $W_3$ be a subspace of $V_4$ of dimension $3$. Since $\Pi$ does not contain $G(2, W_3)$ as mentioned above, and that $G(2, W_3)$ is a $2$-dimensional projective space, it follows that $|\Pi \cap G(2, W_3)| = q+1$. This is true for every $3$-dimensional subspace of $V_4$. Using Lemma \ref{zan}, we have
   $$|\Pi \cap G(2, V_4)| = (q+1) (q^2 + 1) = q^3 + q^2 + q + 1,$$
   which is equal to the desired upper bound\footnote{it follows that any nondecomposable hyperplane in $\PP\left( \bigwedge^2 V_4 \right)$ intersects the Grassmannian $G(2, V_4)$ at exactly $q^3 + q^2 + q + 1$ many points.}. Now assume that $m > 4$ and assume the result is true for $G(2, W_{m-1})$ for every subspace $W_{m-1} \in \V_{m-1}$.
   If $|G(2, W_{m-1}) \cap \Pi| = e(2, m-1)$, for some $W_{m-1} \in \V_{m-1}$, then we are done, thanks to Proposition \ref{submax}. Otherwise, induction hypothesis applies, and we have 
   $\displaystyle{|G(2, W_{m-1}) \cap \Pi| \le e' (2, m-1)}$ for every $W_{m-1} \in \V_{m-1}$.
   Using Lemma \ref{zan}, we deduce that
   $$|G(2, V_m) \cap \Pi| \le e' (2, m-1)\left(\frac{q^m - 1}{q^{m-2} - 1}\right) < e'(2, m).$$
   The last strict inequality follows from Proposition \ref{ineq}(c). It follows from the proof that the bound can possibly be attained by hyperplanes $\Pi$ such that $|G(2, W_{m-1}) \cap \Pi| = e(2, m-1)$, for some $W_{m-1}\in \V_{m-1}$. We refer to Remark \ref{rem:attained} for a class of hyperplanes attaining the bound. 
\end{proof}

We finally state the main result on the second minimum weight of the Grassmann codes. This is equivalent to the following Proposition. 
\begin{theorem}\label{sec}
    Let $\ell, m$ be positive integers with $2 \le \ell \le m-2$. Let $\Pi$ be a hyperplane in $\PP(\bigwedge^{\ell} V_m)$. If $|\Pi \cap G(\ell, V_m)| < e (\ell, m)$, then 
    $|G(\ell, V_m) \cap \Pi| \le e' (\ell, m).$
    Moreover, the bound is attained. 
\end{theorem}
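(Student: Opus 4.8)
The plan is to run an induction on $m$, proving the statement for all admissible $\ell$ at once, and to reduce every configuration either to one of the already-settled extreme values $\ell \in \{2, m-2\}$ or to a strictly smaller Grassmannian. The inductive hypothesis will be the full theorem for every $G(\ell', V_{m'})$ with $m' < m$ and $2 \le \ell' \le m'-2$; the base case $m=4$ (only $\ell=2$) is Proposition \ref{l2}. Since $|\Pi \cap G(\ell, V_m)| < e(\ell, m)$, Nogin's Theorem (Theorem \ref{main}) forces $\Pi$ to be nondecomposable, and this nondecomposability drives every estimate below. I would first dispose of the boundary values: if $\ell = 2$ I invoke Proposition \ref{l2}, and if $\ell = m-2$ I invoke Corollary \ref{cor}, each of which already yields $|\Pi \cap G(\ell, V_m)| \le e'(\ell, m)$. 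It then remains to treat $3 \le \ell \le m-3$ (so $m \ge 6$), precisely the range in which both reductions $(\ell, m) \to (\ell-1, m-1)$ and $(\ell, m) \to (\ell, m-1)$ stay admissible for level $m-1$.

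For the inductive step I would split according to the behaviour of $\Pi$ on the hyperplane sections $G(\ell, V_{m-1})$, $V_{m-1} \in \V_{m-1}$, mirroring the two-case structure of Nogin's proof. In the first case $\Pi$ contains $G(\ell, V_{m-1})$ for some $V_{m-1}$; here the ``guitar'' decomposition \eqref{section}--\eqref{check} gives $|\Pi \cap G(\ell, V_m)| = {m-1 \brack \ell}_q + q^{m-\ell}\,|\widecheck{\Pi} \cap G(\ell-1, V_{m-1})|$, and since $\widecheck{\Pi}$ is again a nondecomposable hyperplane (Remark \ref{rem2}(b), together with the fact from Remark \ref{rem2}(a) that $\widecheck F \ne 0$), Nogin's Theorem gives $|\widecheck{\Pi} \cap G(\ell-1, V_{m-1})| < e(\ell-1, m-1)$, whence the inductive hypothesis at level $m-1$ (legitimate because $\ell \ge 3$ makes $\ell-1 \ge 2$ and $\ell \le m-3$ makes $\ell-1 \le (m-1)-2$) yields $\le e'(\ell-1, m-1)$; Proposition \ref{ineq}(d) then collapses the right-hand side to exactly $e'(\ell, m)$. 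In the remaining case $\Pi$ contains no $G(\ell, V_{m-1})$, and I would further distinguish whether the maximal possible value $e(\ell, m-1)$ is attained on some section. If it is, Proposition \ref{submax} applies verbatim and delivers $|\Pi \cap G(\ell, V_m)| \le e'(\ell, m)$. If not, then by Corollary \ref{cor:glm-1}(a) every section satisfies $|\Pi \cap G(\ell, V_{m-1})| < e(\ell, m-1)$, so the inductive hypothesis gives the uniform bound $e'(\ell, m-1)$; feeding this into Zanella's Lemma (Lemma \ref{zan}) and applying Proposition \ref{ineq}(c) produces $|\Pi \cap G(\ell, V_m)| \le e'(\ell, m-1)\frac{q^m-1}{q^{m-\ell}-1} < e'(\ell, m)$.

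For the sharpness claim I would exhibit an explicit extremal hyperplane rather than reprove optimality: the class $c_\theta X_\theta + \sum_{\a \in \Delta(\theta)} c_\a X_\a + X_\gamma = 0$ with $c_\theta \ne 0$ (for $\theta,\gamma$ as in the construction preceding Proposition \ref{submax}) was shown in Remark \ref{rem:attained} to meet the bound $e'(\ell, m)$. Since these hyperplanes attain $e'(\ell, m) < e(\ell, m)$, they are automatically nondecomposable and satisfy the hypothesis $|\Pi \cap G(\ell, V_m)| < e(\ell, m)$, so the bound of the theorem is indeed achieved.

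I expect the principal subtlety to lie not in any single computation — the delicate Schubert-cell counting is already packaged inside Proposition \ref{submax} — but in organizing the case division so that it is genuinely exhaustive and so that each branch reduces to an instance for which either a proven boundary result or the inductive hypothesis is truly available. The interaction between the ``contains a section'' dichotomy and the ``attains $e(\ell, m-1)$'' dichotomy is where care is most needed: Lemma \ref{zan} becomes useless precisely when $\Pi$ contains some section (its bound then degenerates to ${m \brack \ell}_q$ via Proposition \ref{identities}(c)), which is exactly why that situation must be routed through the decomposition of the first case rather than through the Zanella count.
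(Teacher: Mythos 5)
Your proposal is correct and follows essentially the same route as the paper's proof: induction on $m$ with base case Proposition \ref{l2}, the boundary case $\ell=m-2$ via Corollary \ref{cor}, the dichotomy between $\Pi$ containing some $G(\ell,V_{m-1})$ (handled by the string decomposition \eqref{section}--\eqref{check} together with Proposition \ref{ineq}(d)) and not (handled by Proposition \ref{submax} or by Lemma \ref{zan} with Proposition \ref{ineq}(c)), and sharpness via Remark \ref{rem:attained}. Your explicit early disposal of $\ell=2$ so that $\ell-1\ge 2$ in the first case is a minor tidying of the same argument, not a different approach.
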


\begin{proof}
    We prove this by induction on $m$. When $m = 4$, by hypothesis, $\ell = 2$. The assertion in this case follows from Proposition \ref{l2}.  Thus we may assume that $m > 4$ and the result is true for $G(\ell, W_{m-1})$ for every $W_{m-1} \in \V_{m-1}$. If $\ell = m-2$, then we are done, thanks to Corollary \ref{cor}. Thus, we may assume that $\ell \le m-3$. We distinguish the proof into two cases:
    
    \textbf{Case 1:} \emph{There exists $W_{m-1} \in \V_{m-1}$ such that $G(\ell, W_{m-1}) \subset \Pi$.} Using \eqref{section}, we may write  \begin{equation}\label{one}|G(\ell, V_m) \cap \Pi| = |G(\ell, W_{m-1})| + \sum_{\nu \in \Fq^{m-\ell}} |s^{-1} (\nu) \cap \Pi|.\end{equation} 
    If $\widecheck{F}$ is decomposable, then it follows from  Remark \ref{rem2} that $|s^{-1} (\nu) \cap \widecheck{\Pi}| = e(\ell - 1, m-1)$ for all $\nu \in \Fq^{m- \ell}$.  
    Proposition \ref{ineq} (b) would imply that $|G(\ell, m) \cap \Pi| = e (\ell, m)$, contradicting the hypothesis. Thus $\widecheck{F}$ is not decomposable. By induction hypothesis, we have
    $$|s^{-1} (\nu) \cap \Pi| = |G(\ell - 1, W_{m-1}) \cap \widecheck{\Pi}| \le e'(\ell - 1, m-1) \ \ \text{for all} \ \ \nu \in \Fq^{m- \ell}.$$ The assertion now follows from \eqref{one} and Proposition \ref{ineq} (d).
    
    \textbf{Case 2:} \emph{$\Pi$ does not contain $G(\ell, W_{m-1})$ for any $W_{m-1} \in \V_{m-1}$}. If there exists $W_{m-1} \in \V_{m-1}$ such that $|\Pi \cap G(\ell, W_{m-1})| = e(\ell, m-1)$, then we are done using Proposition \ref{submax}.  
    Otherwise, using induction hypothesis, for every $W_{m-1} \in \V_{m-1}$, we have $|G(\ell, W_{m-1}) \cap \Pi| \le e'(\ell, m-1)$. Now Lemma \ref{zan} applies, and the assertion is proved using Proposition \ref{ineq} (c). 
    Moreover, as we have seen in Remark \ref{rem:attained}, this bound is attained. Furthermore, this bound is also attained by another family of hyperplanes. For this, let $\Pi$ be a hyperplane containing $G(\ell, W_{m-1})$ such that $|\widecheck{\Pi} \cap G(\ell - 1, W_{m-1})| = e'(\ell - 1, {m-1})$. Proposition \ref{ineq} (d) and \eqref{one} imply that $\Pi$ intersects $G(\ell, V_m)$ at exactly $e' (\ell, m)$ many points. 
    
    Note that a hyperplane $\Pi$ containing $G(\ell, W_{m-1})$ satisfying $|\widecheck{\Pi} \cap G(\ell - 1, W_{m-1})| = e'(\ell - 1, m-1)$ attains the bound. A hyperplane satisfying the above conditions exists thanks to induction hypothesis on $m$. Moreover, a class of hyperplanes attaining the bound is provided in Remark \ref{rem:attained}. This completes the proof. 
    \end{proof}

\begin{remark}\normalfont
As explained in the Introduction, Theorem \ref{sec} is equivalent to the fact that the second minimum distance of $C(\ell, m)$ is given by $q^{\ell (m-\ell)} + q^{\ell (m - \ell) - 2}$. Note that in our proof, the minimum distance formula for the Schubert codes turned out to be instrumental. In particular, we have used the relevant result for the Schubert code $C_\theta (\ell, m)$. Since a classification of all the minimum weight codewords of the code $C_{\theta} (\ell, m)$ is not known, we were unable to classify all the codewords of $C(\ell, m)$ achieving the second minimum weight. In general, the determination of the weight distribution for $C(\ell, m)$ remains open. 
Thus it would be nice to prove the result without making use of the minimum distance formula for Schubert codes. On the other hand, it might be interesting to check whether the minimum weight formula for the Schubert codes could be obtained using our methodology, which may possibly give rise to a solution of the \emph{minimum distance codeword conjecture} framed by Ghorpade and Singh in \cite{GS}. We leave these questions for future research.  
\end{remark}

\section{Acknowledgments}
The authors sincerely thank Trygve Johnsen for his careful reading of the manuscript and for providing some very interesting comments. The authors are also thankful to the anonymous referees for their careful reading of the manuscript and for their thoughtful comments, which greatly improved the presentation.

\end{document}